\newtheorem{theorem}{Theorem}[section]
\newtheorem{lemma}[theorem]{Lemma}
\newtheorem{proposition}[theorem]{Proposition}
\newtheorem{definition}{Definition}[section]
\begin{document}
\title{Scattering theory of the $p$-form Laplacian on manifolds with generalized cusps}
\author{E. Hunsicker, N. Roidos, A. Strohmaier}
\date{\today}
\maketitle

\begin{abstract}
  In this paper we consider scattering theory on manifolds with special cusp-like metric singularities 
  of warped product type $g=dx^2 + x^{-2 a}h$, where $a>0$.
  These metrics form a natural subset in the class of metrics with warped product singularities and they can be thought of as 
  interpolating between hyperbolic and cylindrical metrics. We prove that the resolvent of the Laplace operator acting on $p$-forms
  on such a manifold extends to a meromorphic function defined on the logarithmic cover of the complex plane with values
  in the bounded operators between weighted $L^2$-spaces. This allows for a construction of generalized eigenforms for the Laplace operator
  as well as for a meromorphic continuation of the scattering matrix. We give a precise description of the asymptotic expansion
  of generalized eigenforms on the cusp and find that the scattering matrix satisfies a functional equation.
\end{abstract}


\section{Introduction}

The study of spectral theory and scattering theory on non-compact manifolds with ends of various shapes has a long and fruitful tradition in both mathematics and physics. 
One important and extensively studied family consists of manifolds with hyperbolic cusps. These manifolds first appeared in mathematics 
in the context of number theory as quotients of the upper half plane by arithmetic lattices. It was discovered that the spectral theory on such constant negative curvature
surfaces is equivalent to the theory of automorphic functions and that their scattering theory may be used to meromorphically continue Eisenstein series \cite{PF72, LP76}.
Later, methods of scattering theory were applied to the more general case of manifolds with hyperbolic cusps that are of negative curvature outside a compact set \cite{LP76,Mu1,Par}.
The continuous spectrum of the Laplace operator on such manifolds
is known to be $[1/4,\infty)$ and its generalized eigenfunctions are given by the meromorphically continued generalized Eisenstein functions.
The multiplicity of the continuous spectrum is constant and equals the number of cusps. 
Another important family of examples is manifolds with cylindrical ends.
The spectral theory and scattering theory of Dirac type operators on such manifolds  play an important role in the Atiyah Patodi
Singer index theorem \cite{APS75} and scattering theory can be successfully applied to describe the spectral subspaces explicitly \cite{Mu6}.
More recently manifolds with cylindrical ends have been studied from the point of view of inverse scattering in \cite{IKL}. Here the fact that
the scattering matrix is meromorphic (considered as a function on a certain cover of the complex plane) is essential.
The continuous spectrum of the Laplace operator on functions for a manifold with cylindrical end is $[0,\infty)$; its multiplicity
at $\lambda>0$ is the number of eigenvalues of the Laplace operator on the boundary that are smaller than $\lambda$.

In this paper we are interested in orientable manifolds with cusp-like singularities that can be thought of as interpolating between these two cases.
Let $(N,h)$ be a closed oriented Riemannian manifold and endow the product $[1,\infty) \times N$
with the warped product metric
\begin{equation}\label{metric}
g=dx^{2}+x^{-2a}h,
\end{equation}
where $a$ is a fixed positive constant. As $x$ becomes larger the distance between the points $(x,p)$ and $(x,q)$
becomes smaller and thus geometrically this manifold will look like a cusp. We will refer to such manifolds as generalized
cusps. If we define $a=\frac{s}{s-1}$, where $s \in (0,1)$, the simple change of variables $x=(\frac{1}{1-s} y)^{1-s}$ ($y\in[1-s,\infty)$)
 transforms the metric into
\begin{equation}\label{metric2}
g=(1-s)^{2s} y^{-2s}(dy^{2}+h).
\end{equation}
Thus $(1-s)^{-2s}g$ tends to a metric of hyperbolic type as $s$
goes to $1$ and to a metric of cylindrical type as $s$ tends to $0$.

A manifold with generalized cusp is a Riemannian manifold $(M,g)$ that can be decomposed as
$$
 M= M_0 \cup_N [1,\infty)\times N,
$$
where $M_0$ is a compact Riemannian manifold with boundary $N$ and $[1,\infty)\times N$ is a generalized cusp over $N$.
These manifolds are all complete, and may have either finite or infinite volume depending on $a$ and on the 
dimension of $M$.

Over such manifolds, we consider the Laplace-Beltrami operator on differential forms.  This is defined as
\[
\Delta_M = d\delta + \delta d,
\]
where $d$ is the exterior derivative on forms and $\delta$ is its formal adjoint. Functions over $M$
are of course zero-degree forms, and the restriction of this operator to functions is the standard Laplace operator.  
A general result of Gaffney \cite{Gaff} shows that when $M$ is complete, the Laplace-Beltrami operator is essentially self-adjoint, so unless otherwise indicated, we will work with its unique self-adjoint extension in this paper. For brevity, we will refer to this operator in the remainder of this paper simply as the Laplace operator.

The Laplace operators associated to manifolds with such generalized cusps have been studied by 
previous authors.  
In  \cite{An}, \cite{An2}, Francesca Antoci identified the essential spectrum of the Laplace-Beltrami
operator on $p$-forms for such manifolds by calculating directly on the cusps and using a result of 
Weyl that states the essential spectrum is not changed by a compact change to the manifold.
In \cite{GM}, Sylvain Gol\'enia and Sergiou Moroianu refine these results to determine the multiplicity
of the spectrum of 
the $p$-form Laplacian on such manifolds and show that its essential spectrum vanishes when the $p$ and
$p-1$ cohomology groups of the boundary vanish.  In this case, they obtain Weyl-type asymptotics for
the eigenvalue counting function.  In the cases when there is essential spectrum, they give a limiting-absorption
principle. 
In fact, the results in \cite{GM,Go,An, An2} apply to metrics that generalize those in Equation \ref{metric} to the 
situation where $h$ may depend on $x$ but is asymptotically constant.
In this paper we focus on the more special case when $h$ is independent of $x$ but we derive results that are
stronger than those that can be obtained through the methods of \cite{GM}, \cite{An} and \cite{An2}.
In particular we construct a meromorphic continuation of the resolvent, and explicitly describe the structure of the continuous spectrum 
and the behavior of the generalized eigenfunctions.

Situations in which the resolvent or the scattering matrix continues analytically across the spectrum
appear quite often in mathematical physics. Our analysis adds an important example to the class
of such problems. Namely one for which the resolvent admits a meromorphic continuation to a logarithmic
cover of the complex plane but not to a finite cover. This is known to occur
in even dimensions on Euclidean spaces or on globally symmetric spaces of non-compact type and odd rank. 
Our example, however, is effectively one dimensional in the sense that the model operator used for dynamical
scattering theory is a second order operator on a half line. The analytic continuation
gives important additional control over the continuous spectrum.
As in the case of hyperbolic cusps and cylindrical ends, the scattering matrix can be shown to 
satisfy a functional equation that allows its holomorphic continuation across the continuous spectrum.
The functional equation for generalized cusps has a more complicated structure than in the hyperbolic cusp or cylindrical end setting, in that it involves more than only quadratic expressions in the scattering matrix. Thus, whereas some of our results are similar in nature to the cases
of hyperbolic cusps and manifolds with cylindrical ends, some important features of the spectral theory for generalized
cusps are quite different. For instance, for generic values of $a$, zero is a branching point of infinite order
for both the resolvent and scattering matrix. This is in contrast to the cases of hyperbolic cusps and cylinders, where only quadratic branching points appear. 

The paper is organized as follows. In Section 2 we discuss in detail the spectral theory of the $p$-form Laplace operator
$\Delta_{c,p}$ on the cusp $[1,\infty) \times N$ with Dirichlet boundary conditions imposed at the boundary, $\partial([1,\infty)\times N)=\{(1,\zeta) \mid \zeta \in N\}$.  First, we prove that the distributional kernel of the resolvent $(\Delta_{c,p}-\lambda)^{-1}$ continues meromorphically as a single-valued 
function to the logarithmic cover of $\mathbb{C} - \{0\}$ with parameter $z$, where $\mathrm{e}^{z}=\lambda \in [0,\infty)$ is the original spectral parameter. Then, using separation of variables we explicitly determine the continuous spectral subspace for this case, as well as the generalized eigensections.
In Section 3 we use our results from Section 2 about $\Delta_{c,p}$ together with  techniques from stationary scattering theory to prove the following result.

\begin{theorem}\label{t57}
Let $M=M_{0}\cup_{N} \big([1,\infty)\times N\big)$ be an oriented $n$-dimensional manifold with generalized cusp, with metric on the cusp given by $dx^{2}+x^{-2a}h$, $a>0$.
Then the resolvent $(\Delta_{M,p}-\lambda)^{-1}$ of the Laplace operator $\Delta_{M,p}$ acting on $p$-forms
has a meromorphic continuation from the physical sheet to the logarithmic
cover with values in $\mathcal{L}(H_{+},H_{-})$, where $H_{\pm}=\chi_\pm \cdot L^{2}(M,\wedge^{p}T^{\ast}M)$ and
 $\chi_\pm \in C(M)$ is the continuous extension by $1$ of the function $\mathrm{e}^{\mp\frac{x^{2}}{2}}$ defined on the cusp $[1,\infty)\times N$.
Moreover, the negative coefficients of the Laurent expansion at any pole are finite rank operators.
\end{theorem}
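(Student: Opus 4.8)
The plan is to build the global resolvent by gluing together two model resolvents whose continuations are already available, and then to correct the gluing error by an analytic Fredholm argument. The two models are: (i) the Dirichlet cusp operator $\Delta_{c,p}$ on $C=[1,\infty)\times N$, whose resolvent $R_c(\lambda)=(\Delta_{c,p}-\lambda)^{-1}$ continues meromorphically to the logarithmic cover with values in $\mathcal{L}(\chi_+L^{2}(C),\chi_-L^{2}(C))$ and with finite-rank negative Laurent coefficients, by the analysis of Section 2; and (ii) a Laplacian $\Delta_{1,p}$ on a closed manifold $M_1$ obtained by capping off the boundary of the compact piece $M_0$. Since $M_1$ is compact, $\Delta_{1,p}$ has discrete spectrum, so its resolvent $R_1(\lambda)$ is meromorphic on all of $\mathbb{C}$ with finite-rank residues; composing with $\lambda=\mathrm{e}^{z}$ makes it meromorphic on the whole $z$-plane. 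On any compact set the weighted norms defining $H_\pm$ are equivalent to the $L^2$-norm, so $R_1$ contributes no growth and the global weight is exactly the one dictated by the cusp.

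Next I would fix a partition of unity adapted to the decomposition $M=M_0\cup_N C$. Choosing $\delta>0$, let $\phi_2\in C^\infty(M)$ equal $1$ for $x\ge 1+2\delta$ and $0$ for $x\le 1+\delta$, so that $\operatorname{supp}\phi_2$ avoids the Dirichlet face $\{x=1\}$ of the cusp model, and set $\phi_1=1-\phi_2$. Pick $\psi_1,\psi_2\in C^\infty(M)$ with $\psi_j\equiv 1$ on $\operatorname{supp}\phi_j$, with $\operatorname{supp}\psi_1$ compact and $\operatorname{supp}\psi_2\subset\{x\ge 1+\delta/2\}$. Identifying sections supported in $\operatorname{supp}\phi_1$ with sections on $M_1$, define the parametrix
\[
G(\lambda)=\psi_1\,R_1(\lambda)\,\phi_1+\psi_2\,R_c(\lambda)\,\phi_2 .
\]
This maps $H_+\to H_-$ and is meromorphic on the logarithmic cover. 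A direct computation gives
\[
(\Delta_{M,p}-\lambda)\,G(\lambda)=\mathrm{Id}+K(\lambda),\qquad
K(\lambda)=[\Delta_{M,p},\psi_1]\,R_1(\lambda)\,\phi_1+[\Delta_{M,p},\psi_2]\,R_c(\lambda)\,\phi_2 ,
\]
where each commutator $[\Delta_{M,p},\psi_j]$ is a first-order operator supported in the fixed compact annulus $\{1+\delta/2\le x\le 1+2\delta\}$.

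The key properties of $K(\lambda)$ are then established. Because the output of $K(\lambda)$ is cut off to this compact annulus, while the model resolvents gain two derivatives, $K(\lambda)f$ lies in $H^1$ of a fixed compact set with norm controlled by $\|f\|_{H_+}$; Rellich's theorem then shows $K(\lambda)\colon H_+\to H_+$ is compact, and it is meromorphic on the logarithmic cover with poles only at those of $R_1$ and $R_c$. To start the analytic Fredholm machinery I would check invertibility of $\mathrm{Id}+K(\lambda)$ at one point: for $\lambda$ real and sufficiently negative the model resolvents are genuine and satisfy $\|[\Delta_{M,p},\psi_j]R_j(\lambda)\|\to 0$, so $\|K(\lambda)\|<1$ there. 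The meromorphic Fredholm theorem then yields that $(\mathrm{Id}+K(\lambda))^{-1}$ is a meromorphic family of bounded operators on $H_+$ with finite-rank negative Laurent coefficients. Consequently $R_M(\lambda):=G(\lambda)\,(\mathrm{Id}+K(\lambda))^{-1}$ is meromorphic on the logarithmic cover with values in $\mathcal{L}(H_+,H_-)$; on the physical sheet it agrees with $(\Delta_{M,p}-\lambda)^{-1}$, since there $\mathrm{Id}+K$ is invertible and the genuine resolvent exists, and uniqueness of meromorphic continuation identifies $R_M$ with the continuation of the resolvent. Finiteness of rank of the negative Laurent coefficients of $R_M$ follows because it is a product of two meromorphic families each having finite-rank negative Laurent coefficients (those of $G$ coming from the compact model $R_1$ and, by Section 2, from $R_c$; those of $(\mathrm{Id}+K)^{-1}$ from the Fredholm theorem).

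The main obstacle is not the gluing itself, which is standard, but the consistent bookkeeping of the weighted spaces: one must ensure that $\phi_j,\psi_j$ and the multiplications by them respect the maps $H_+\to\chi_+L^{2}(C)$ and $\chi_-L^{2}(C)\to H_-$ provided by Section 2, that the Dirichlet face of the cusp model is screened off by the supports of the cutoffs, and that the compactness of $K(\lambda)$ survives in the weighted setting. The remaining subtlety is verifying that the negative Laurent coefficients of $R_c$ are genuinely finite rank on the logarithmic cover, a property I would extract from the explicit separation-of-variables description in Section 2, where at each point of the cover only finitely many $\Delta_N$-modes contribute singular terms.
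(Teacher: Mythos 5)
Your proposal is correct and follows essentially the same route as the paper: a cutoff gluing of the meromorphically continued cusp Dirichlet resolvent with the discrete-spectrum resolvent on the compact side, followed by the meromorphic Fredholm theorem applied to a compactly supported, compact, meromorphic error term whose norm is small at one point of the physical sheet. The differences are cosmetic only: you build a right parametrix and cap off $M_0$ to a closed manifold $M_1$, whereas the paper builds a left parametrix ($\mathcal{Q}_z(\Delta_M-\mathrm{e}^z)=I+T_z$) using the Dirichlet Laplacian on $M_0$ itself, and you obtain compactness of the error via Rellich while the paper observes its kernel is smoothing with support off the diagonal and compact in one variable.
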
 

The existence of such a meromorphic continuation with finite rank poles implies that the spectrum of the operator
consists of eigenvalues of finite multiplicities that cannot accumulate in $(0,\infty)$, together with an absolutely continuous part. 
In fact this theorem can be slightly strengthened in order to also describe the analytic structure of the
continuation near $\lambda=0$. Namely, using the calculus developed in \cite{MS} our proof shows that
the resolvent is Hahn-meromorphic (in the sense of \cite{MS}) near zero
with finite rank poles and with values in $\mathcal{L}(H_{+},H_{-})$. This implies in particular
that eigenvalues cannot accumulate near zero.
The meromorphic continuation together with the differential equation on the cusp can then be used to construct generalized eigenforms.
Suppose $\mathcal{H}^p(N)$ is the space of harmonic $p$-forms on $N$. Our second main result is
\begin{theorem}
\label{th1}
Let $M=M_{0}\cup_{N} \big([1,\infty)\times N\big)$ be an oriented  $n$-dimensional manifold with generalized cusp, with metric on the cusp given by $dx^{2}+x^{-2a}h$, $a>0$. Let $\mathcal{H}^{p}(N)$ be the space of square integrable harmonic $p$-forms on $N$, and $\Delta_{M,p}$ be the Laplace operator acting on smooth  $p$-forms on $M$. For any $(\theta,\tilde{\theta}) \in\mathcal{H}^{p}(N)\oplus\mathcal{H}^{p-1}(N)$ and any $z\in\mathbb{C}$, there exists a $p$-form $E_{z}(y,\theta,\tilde{\theta})$ on $M$, called the $u=\mathrm{e}^{z}$ {\em generalized eigenform} of $\Delta_{p}$, with the following properties:
\\
1) $E_{z}(y,\theta,\tilde{\theta})$ is smooth in $y\in M$ and meromorphic in $z\in\mathbb{C}$.
\\
2)  $(\Delta_{M,p}-\mathrm{e}^{z}I)E_{z}(y,\theta,\tilde{\theta})=0$ for any $y\in M$ and $z\in\mathbb{C}$.\\
3)  For $y = (x,\zeta) \in [1,\infty) \times N$ and $z\in\mathbb{C}$, there is an expansion of the form
\begin{gather*}
E_{z}(y,\theta,\tilde{\theta})=
x^{b_{p}}\mathrm{H}^{(2)}_{b_{p}}(\mathrm{e}^{z/2} x)\theta+dx\wedge x^{b_{p-1}} \mathrm{H}^{(2)}_{b_{p-1}-1}(\mathrm{e}^{z/2} x)\tilde{\theta}
\\
+x^{b_{p}}\mathrm{H}^{(1)}_{b_{p}}(\mathrm{e}^{z/2} x)C_{p,z}(\theta)+dx\wedge x^{b_{p-1}}\mathrm{H}^{(1)}_{b_{p-1}-1}(\mathrm{e}^{z/2} x)C_{p-1,z}(\tilde{\theta})+\Psi_{z}((x,\zeta),\theta,\tilde{\theta}),
\end{gather*}
where $\mathrm{H}^{(1)}_{b_{p}}$ and  $\mathrm{H}^{(2)}_{b_{p}}$ are the Hankel functions of the first and second kind, respectively, of order
\begin{gather*}
b_{p}=\frac{a(n-2p-1)+1}{2} 
\end{gather*}
and where
\begin{gather*}
 C_{p,z} \in \mathrm{End}\Big(\mathcal{H}^{p}(N)\Big)
\end{gather*}
is a linear endomorphism, meromorphic in $z\in\mathbb{C}$, called the {\em (stationary) scattering matrix} associated to $E_{z}(y,\theta,\tilde{\theta})$. The tail term in the expansion satisfies the estimate
\begin{gather*}
\Psi_{z}((x,\zeta),\theta,\tilde{\theta})=O(x^{b_{p-1}-\frac{1}{2}}\mathrm{e}^{(-\frac{k}{a+1}+\epsilon) x^{a+1}}), \,\,\, \forall \epsilon >0,
\end{gather*}
where $k>0$ is the square root of the smallest nonzero eigenvalue of the $p$-form Laplacian of the boundary $N$. Finally, $E_{z}(y,\theta,\tilde{\theta})$, $C_{p,z}$ and $\Psi_{z}(y,\theta,\tilde{\theta})$ are uniquely determined by the above properties. 
\end{theorem}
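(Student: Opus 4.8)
The plan is to construct $E_z$ by correcting an explicit model solution on the cusp with the meromorphically continued resolvent furnished by Theorem \ref{t57}. First I would use the separation of variables from Section 2 to write down, for harmonic data $\theta\in\mathcal{H}^p(N)$ and $\tilde\theta\in\mathcal{H}^{p-1}(N)$, the incoming model form
\[
E^0_z=x^{b_{p}}\mathrm{H}^{(2)}_{b_{p}}(\mathrm{e}^{z/2}x)\,\theta+dx\wedge x^{b_{p-1}}\mathrm{H}^{(2)}_{b_{p-1}-1}(\mathrm{e}^{z/2}x)\,\tilde\theta
\]
on $[1,\infty)\times N$. Because $\theta$ and $\tilde\theta$ are harmonic, the fibrewise Laplacian contributes nothing and the equation $(\Delta_{M,p}-\mathrm{e}^{z})E^0_z=0$ reduces, on the cusp, to a pair of Bessel equations of orders $b_{p}$ and $b_{p-1}-1$ in the argument $\mathrm{e}^{z/2}x=\lambda^{1/2}x$; this is exactly what fixes the indices $b_{p}$ and the Hankel functions as the fundamental solutions.

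Next I would choose a smooth cutoff $\chi$ that equals $1$ for large $x$ on the cusp and vanishes on $M_{0}$, so that $g_z:=(\Delta_{M,p}-\mathrm{e}^{z})(\chi E^0_z)$ is compactly supported and hence lies in $H_{+}$. Setting
\[
E_z=\chi E^0_z-(\Delta_{M,p}-\mathrm{e}^{z})^{-1}g_z
\]
produces, by Theorem \ref{t57}, a form that is meromorphic in $z$ on the logarithmic cover and satisfies $(\Delta_{M,p}-\mathrm{e}^{z})E_z=0$ identically; smoothness in $y$ follows from elliptic regularity. This yields properties (1) and (2). To obtain the expansion (3) I would analyse the correction term $u_z:=-(\Delta_{M,p}-\mathrm{e}^{z})^{-1}g_z$. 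On the physical sheet this is the genuine resolvent applied to a compactly supported section, so $u_z$ is square integrable and solves the homogeneous equation on the part of the cusp where $g_z$ vanishes. Decomposing $u_z$ over the spectral decomposition of the fibre Laplacian on $N$, the harmonic sector again yields Bessel equations, and square integrability selects the decaying outgoing solution $\mathrm{H}^{(1)}$, whose coefficients define the endomorphisms $C_{p,z}$ and $C_{p-1,z}$. Meromorphic continuation of the resolvent then continues these to the whole logarithmic cover and supplies the $\mathrm{H}^{(1)}$ terms in the stated expansion.

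In each nonharmonic sector the fibre metric $x^{-2a}h$ produces a radial potential growing like $k^{2}x^{2a}$, so the relevant solutions decay like $\exp(-\tfrac{k}{a+1}x^{a+1})$; collecting these contributions into $\Psi_{z}$ and combining the known large-argument asymptotics of the Hankel functions with a WKB estimate for the growing-potential equations gives the claimed bound on $\Psi_{z}$. For uniqueness, the difference $F_z$ of two forms satisfying (1)--(3) with the same data $(\theta,\tilde\theta)$ has vanishing $\mathrm{H}^{(2)}$ component and solves the homogeneous equation; on the physical sheet it is therefore a square integrable solution of $(\Delta_{M,p}-\mathrm{e}^{z})F_z=0$. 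Since the eigenvalues of $\Delta_{M,p}$ are isolated (a consequence of Theorem \ref{t57}), $F_z=0$ for all but isolated $z$, and meromorphy forces $F_z\equiv0$, which simultaneously pins down $C_{p,z}$ and $\Psi_{z}$.

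I expect the main obstacle to lie in the analysis of the correction term on the cusp: precisely matching the output of the continued resolvent to the Hankel solution space, proving meromorphy of the scattering matrix on the entire logarithmic cover (in particular near the infinite-order branch point at $\lambda=0$), and controlling the nonharmonic sectors uniformly enough to extract the exponential estimate on $\Psi_{z}$. The interplay between the $H_{-}$ growth allowed by the weight $\chi_{-}=\mathrm{e}^{x^{2}/2}$ and the genuine $L^{2}$ mapping property on the physical sheet is what makes the identification of the outgoing coefficients clean, and it is this identification, propagated by meromorphic continuation, that I expect to require the most care.
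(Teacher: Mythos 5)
Your construction is essentially the paper's: cut off the $\mathrm{H}^{(2)}$ model solution, correct it by the meromorphically continued resolvent applied to the compactly supported error term, deduce (1) and (2) from the resolvent identity plus meromorphy, read off the expansion on the physical sheet from square integrability of the correction, and prove uniqueness from self-adjointness plus meromorphy. However, there is a genuine gap in your treatment of (3). The correction term $u_z$ is linear in the pair $(\theta,\tilde\theta)$ \emph{jointly}, so what your $L^2$ argument in the fibre-harmonic sector actually yields is an expansion with coefficients $\eta_{p,z}(\theta,\tilde\theta)\in\mathcal{H}^{p}(N)$ and $\tilde\eta_{p,z}(\theta,\tilde\theta)\in\mathcal{H}^{p-1}(N)$ that may each depend on both $\theta$ and $\tilde\theta$. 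Nothing in your argument rules out cross terms, and nothing identifies the coefficient of the $dx\wedge$ term as $C_{p-1,z}(\tilde\theta)$, where $C_{p-1,z}$ is the \emph{same} endomorphism that arises as the scattering matrix for the degree-$(p-1)$ problem --- both of which the theorem asserts. The paper needs a separate lemma to close this: since $d$ and $\delta$ commute with $\Delta_M$, the Hankel recurrence $\frac{d}{dw}\left(w^{b}\mathrm{H}^{(i)}_{b}(w)\right)=w^{b}\mathrm{H}^{(i)}_{b-1}(w)$ shows that $dE_z(y,\theta,\tilde\theta)$ and $\delta E_z(y,\theta,\tilde\theta)$ are again generalized eigensections (in degrees $p\pm1$) whose incoming data involve only $\theta$, respectively only $\tilde\theta$; applying the uniqueness statement in those degrees and comparing coefficients then forces $\eta_{p,z}(\theta,\tilde\theta)=\eta_{p,z}(\theta,0)$ and $\tilde\eta_{p,z}(\theta,\tilde\theta)=\tilde\eta_{p,z}(0,\tilde\theta)$, the latter being the degree-$(p-1)$ scattering coefficient. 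You need this intertwining argument (or a substitute) before the expansion can be written in the stated block form with $C_{p,z}\in\mathrm{End}(\mathcal{H}^{p}(N))$.

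A secondary, more technical issue concerns the tail estimate. In the sectors spanned by the coexact/exact eigenform pairs $\{\phi^i_p,\psi^i_{p-1}\}$ the radial equations do not decouple: the Laplacian contains the off-diagonal term $\sqrt{\mu_i}\,\frac{2a}{x}\,\mathcal{U}$, so a scalar WKB bound does not directly apply. The paper converts the coupled system to a first-order system, diagonalizes it, and invokes Perron's theorem on almost-diagonal systems to produce a fundamental system with growth rates $\pm\frac{\mu_i}{a+1}x^{a+1}$ (up to $\epsilon$). Moreover, the decay of $\Psi_z$ is claimed for \emph{all} $z$ on the logarithmic cover, while square integrability of $\Psi_z$ is only available on the physical sheet; one must expand $\Psi_z$ in a fundamental system chosen to depend meromorphically on $z$, observe that the coefficients of the exponentially growing solutions vanish on the physical sheet, and conclude by meromorphy that they vanish identically. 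Your proposal gestures at the right decay rates but omits both the handling of the coupling and this continuation step.
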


\noindent
Our analysis implies (see end of section \ref{sec:3})  that the absolutely continuous subspace of the Laplace operator on $p$-forms
coincides with the closure of the span of the set
$$
 \{ \int_{-\infty}^\infty E_z(\cdot,\theta , \tilde \theta) f(z) d z \mid f \in C^\infty_0(\mathbb{R}), \theta , \tilde \theta \in\mathcal{H}^{p}(N)\oplus\mathcal{H}^{p-1}(N) \},
$$
and that the forms $E_z(x,\theta , \tilde \theta)$ are indeed generalized $\lambda=\mathrm{e}^z$-eigenforms for the Laplacian over $M$  in the sense that
$$
 \Delta_{c,p} \int_{-\infty}^\infty E_z(\cdot,\theta , \tilde \theta) f(z) d z = \int_{-\infty}^\infty \mathrm{e}^{z} E_z(\cdot,\theta , \tilde \theta) f(z) d z.
$$

Thus this theorem gives the spectral decomposition of the restriction of $\Delta_{p}$ to its absolutely continuous subspace.

In Section 4 we show that the scattering matrix $C_{p,z}$  defined in the previous theorem is unitary, and satisfies a functional equation and a certain commutation relation with the Hodge star operator.  These results are summarized in our third main theorem:

\begin{theorem}
\label{th2}
Let $C_{p,z}$ be the scattering matrix defined in the previous theorem. For any $z\in\mathbb{C}$, we have that 
\begin{gather*}
 C_{p,\bar{z}}^{\ast}\circ C_{p,z}=I \,\, \mbox{(unitarity)} \,\,\, \mbox{and} \,\,\, \bar{C}_{p,\bar{z}}\circ C_{p,z}=I.
\end{gather*}
If we denote $b_{p}=\frac{a(n-2p-1)+1}{2}$, then the scattering matrix satisfies the following functional equation 
$$
 \left((1 + \mathrm{e}^{2 \pi \mathrm{i} b_p})I - C_{p,z}\right) \circ C_{p,z-2\pi \mathrm{i}} = \mathrm{e}^{2 \pi \mathrm{i} b_p} I.
$$
Also, if $\ast_{N}$ is the Hodge star operator on the boundary $N$, the  commutation relation
\begin{gather*}
*_N C_{p,z}(\theta) = \mathrm{e}^{2b_p \pi \mathrm{i}} C_{n-p-1}(*_N \theta)
\end{gather*}
holds for all $p \leq \frac{n-1}{2}$.
\end{theorem}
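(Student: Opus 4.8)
The plan is to read all three identities off the asymptotic expansion in Theorem~\ref{th1}, using crucially its uniqueness clause: a solution of $(\Delta_{M,p}-\mathrm{e}^{z}I)E=0$ whose tail $\Psi_z$ satisfies the stated decay is determined on the cusp by its \emph{incoming data}, i.e.\ by the coefficients of the $\mathrm{H}^{(2)}$-terms; the scattering matrix $C_{p,z}$ is by definition the map sending that incoming datum to the outgoing ($\mathrm{H}^{(1)}$) coefficient. Since $b_p$ is real and the two channels $\theta\in\mathcal H^{p}(N)$ and $\tilde\theta\in\mathcal H^{p-1}(N)$ decouple, I may set $\tilde\theta=0$ to isolate $C_{p,z}$. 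Each statement then results from applying a natural symmetry (conjugation, the monodromy $z\mapsto z-2\pi\mathrm i$, or $*_M$) to $E_z$, re-expanding in the basis $\{\mathrm H^{(2)}_{b_p},\mathrm H^{(1)}_{b_p}\}$, and comparing with $E_z$ via uniqueness. The Bessel-function input I use is the reflection law $\mathrm H^{(1)}_{-\nu}(t)=\mathrm e^{\nu\pi\mathrm i}\mathrm H^{(1)}_{\nu}(t)$, $\mathrm H^{(2)}_{-\nu}(t)=\mathrm e^{-\nu\pi\mathrm i}\mathrm H^{(2)}_{\nu}(t)$, the conjugation law $\overline{\mathrm H^{(2)}_{\nu}(t)}=\mathrm H^{(1)}_{\nu}(t)$ (for real $\nu,t$), and the analytic-continuation (connection) formulas, which I will need in the form $\mathrm H^{(2)}_{\nu}(t\mathrm e^{-\mathrm i\pi})=-\mathrm e^{\nu\pi\mathrm i}\mathrm H^{(1)}_{\nu}(t)$ and $\mathrm H^{(1)}_{\nu}(t\mathrm e^{-\mathrm i\pi})=2\cos(\nu\pi)\mathrm H^{(1)}_{\nu}(t)+\mathrm e^{-\nu\pi\mathrm i}\mathrm H^{(2)}_{\nu}(t)$.

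For \emph{unitarity} I would proceed in two steps and then analytically continue from the real axis, where $\lambda=\mathrm e^{z}>0$. First, conjugating the expansion for real $\nu$ turns $\mathrm H^{(2)}\leftrightarrow\mathrm H^{(1)}$, so $\overline{E_z(\theta,0)}$ solves the $\bar\lambda$-equation and, re-read at parameter $\bar z$, has incoming coefficient $\overline{C_{p,z}\theta}=\bar C_{p,z}\bar\theta$ and outgoing coefficient $\bar\theta$. Uniqueness gives $\overline{E_z(\theta)}=E_{\bar z}(\bar C_{p,z}\bar\theta)$, whose outgoing datum is $C_{p,\bar z}\bar C_{p,z}\bar\theta$; comparing yields $C_{p,\bar z}\circ\bar C_{p,z}=I$, equivalently $\bar C_{p,\bar z}\circ C_{p,z}=I$ after replacing $z\mapsto\bar z$ and inverting in the finite-dimensional endomorphism algebra. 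Second, the genuinely Hermitian relation $C_{p,\bar z}^{\ast}\circ C_{p,z}=I$ I would obtain from a flux (Maass--Selberg type) identity: for real $\lambda$ and two generalized eigenforms $u,v$, Green's formula on the truncated manifold $M_0\cup([1,R]\times N)$ has vanishing interior term $(\lambda-\bar\lambda)\langle u,v\rangle$, so the boundary flux over $\{R\}\times N$ is independent of $R$; evaluating it as $R\to\infty$ via the Hankel Wronskian $W[\mathrm H^{(1)}_{\nu},\mathrm H^{(2)}_{\nu}](t)=-\tfrac{4\mathrm i}{\pi t}$ and the $L^2$-orthogonality of harmonic forms on $N$ equates incoming and outgoing flux, i.e.\ $\langle\theta,\phi\rangle=\langle C_{p,z}\theta,C_{p,z}\phi\rangle$, whence $C_{p,z}^{\ast}C_{p,z}=I$ on the real axis. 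Both sides being holomorphic after writing $C_{p,\bar z}^{\ast}$, the identity propagates to all $z$.

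For the \emph{functional equation} the key observation is that $z\mapsto z-2\pi\mathrm i$ fixes the eigenvalue, $\mathrm e^{z-2\pi\mathrm i}=\mathrm e^{z}=\lambda$, while rotating the Hankel argument: $\mathrm e^{(z-2\pi\mathrm i)/2}x=(\mathrm e^{z/2}x)\mathrm e^{-\mathrm i\pi}$. Thus $E_{z-2\pi\mathrm i}(\theta)$ is again a solution of the $\lambda$-equation, and applying the two connection formulas above with $\nu=b_p$ re-expresses its cusp expansion in the basis at argument $\mathrm e^{z/2}x$, with incoming coefficient $\mathrm e^{-b_p\pi\mathrm i}C_{p,z-2\pi\mathrm i}\theta$ and outgoing coefficient $-\mathrm e^{b_p\pi\mathrm i}\theta+2\cos(b_p\pi)\,C_{p,z-2\pi\mathrm i}\theta$. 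Uniqueness then forces $E_{z-2\pi\mathrm i}(\theta)=\mathrm e^{-b_p\pi\mathrm i}E_{z}\!\left(C_{p,z-2\pi\mathrm i}\theta\right)$, so its outgoing coefficient also equals $\mathrm e^{-b_p\pi\mathrm i}C_{p,z}C_{p,z-2\pi\mathrm i}\theta$. Equating the two expressions for the outgoing coefficient and using $2\cos(b_p\pi)\mathrm e^{b_p\pi\mathrm i}=1+\mathrm e^{2b_p\pi\mathrm i}$ gives exactly
\[
\left((1+\mathrm e^{2\pi\mathrm i b_p})I-C_{p,z}\right)\circ C_{p,z-2\pi\mathrm i}=\mathrm e^{2\pi\mathrm i b_p}I.
\]

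For the \emph{Hodge-star relation} I would use that $*_M$ commutes with $\Delta_M$, so $*_M E_z^{(p)}(\theta,0)$ is a generalized $(n-p)$-eigenform at the same $z$. On the cusp, with warping $\phi=x^{-a}$, a form $f(x)\theta$ with $\theta\in\Omega^{p}(N)$ satisfies $*_M(f\,\theta)=\pm f\,x^{-a(n-1-2p)}\,dx\wedge *_N\theta$, so it lands purely in the $dx$-channel of the $(n-p)$-form eigenform, which involves $\mathcal H^{(n-p)-1}(N)=\mathcal H^{n-p-1}(N)\ni *_N\theta$. The two algebraic identities $b_p-a(n-2p-1)=b_{n-p-1}$ and $b_{n-p-1}-1=-b_p$ convert the powers of $x$ and the Hankel orders correctly, and the reflection law then produces the phases: the incoming term picks up $\mathrm H^{(2)}_{b_p}=\mathrm e^{b_p\pi\mathrm i}\mathrm H^{(2)}_{-b_p}=\mathrm e^{b_p\pi\mathrm i}\mathrm H^{(2)}_{b_{n-p-1}-1}$ while the outgoing term picks up $\mathrm H^{(1)}_{b_p}=\mathrm e^{-b_p\pi\mathrm i}\mathrm H^{(1)}_{b_{n-p-1}-1}$. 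Reading $*_M E_z^{(p)}(\theta,0)$ as the $(n-p)$-eigenform with incoming datum $\mathrm e^{b_p\pi\mathrm i}*_N\theta$ and comparing outgoing coefficients yields $\mathrm e^{-b_p\pi\mathrm i}*_N C_{p,z}\theta=\mathrm e^{b_p\pi\mathrm i}C_{n-p-1,z}(*_N\theta)$, i.e.\ $*_N C_{p,z}(\theta)=\mathrm e^{2b_p\pi\mathrm i}C_{n-p-1,z}(*_N\theta)$, the restriction $p\le\frac{n-1}{2}$ merely fixing a non-redundant range. The main technical obstacles I expect are, first, the flux computation for the Hermitian unitarity—one must verify that neither the compact piece $M_0$ nor the tail $\Psi_z$ contributes to the boundary flux as $R\to\infty$, and track signs in the Hodge-theoretic boundary form; and second, justifying each application of uniqueness, namely that the conjugated, monodromy-transformed, and $*_M$-transformed solutions carry an admissible (suitably decaying) tail when re-expanded at the new parameter, so that Theorem~\ref{th1} genuinely applies.
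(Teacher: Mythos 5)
Your proposal is correct and follows essentially the same route as the paper: a Green's-formula/flux (Maass--Selberg type) argument with Hankel asymptotics for the Hermitian unitarity plus the conjugation symmetry for $\bar C_{p,\bar z}\circ C_{p,z}=I$, the connection formulas for $\mathrm{H}^{(1)}_{b_p},\mathrm{H}^{(2)}_{b_p}$ at argument $\mathrm{e}^{-\mathrm{i}\pi}w$ combined with uniqueness for the functional equation, and commutation of $*_M$ with $\Delta_M$ together with the order-reflection laws $\mathrm{H}^{(j)}_{-b}= \mathrm{e}^{\mp b\pi\mathrm{i}}\mathrm{H}^{(j)}_{b}$ and uniqueness for the Hodge-star relation. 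The only cosmetic difference is that you prove unitarity on the real axis and then continue holomorphically, whereas the paper runs the same flux computation for $z$ in the physical sheet (pairing $E_{\bar z}$ with $E_z$) before continuing.
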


If $\mathrm{e}^{2 \pi \mathrm{i} b_p} = -1$, then this is the scattering relation for the cylinder:  
$C_{p,z} \circ C_{p,z-2\pi \mathrm{i}} = I.$  This implies that the stationary scattering matrix descends
meromorphically to the double cover of the punctured plane. We get this reduction when 
$p=(n-1)/2$, $a$ is an even integer, or when $a$ is an integer and $n-1$ is even.
The parity $n-1$, which can be interpreted as the dimension of the cross section, 
$N$, is known in the case of hyperbolic-type cusps
(e.g. in \cite{HHM}) to relate to the isolation of 0 in the $L^2$ spectrum of the Laplace operator.

\subsection*{Acknowledgements}
The work on this paper was supported in part by the Leverhulme Trust, grant F/00 261/Z.

\section{Geometry and spectral theory on the generalized cusp}

Suppose as before that $N$ is a closed Riemannian manifold and $[1,\infty) \times N$
is endowed with the warped product metric
$$g=dx^{2}+x^{-2a}h,$$
where $a>0$ is fixed. The spectral decomposition of the Laplace operator
on $p$-forms on such a manifold can be determined using separation of variables.
We work with the Friedrichs extension,
$\Delta_{c,p}$, of the operator $\Delta_{p}$ on the space of smooth 
compactly supported forms on the cusp:  $\Omega^p_0((1,\infty)\times N)$.
By standard arguments, this corresponds to the Laplacian with 
Dirichlet boundary conditions at the boundary $\{1\} \times N$.

Any smooth differential form $\omega \in \Omega^p([1,\infty)\times N)$ can be written uniquely
as
$$
 \omega = \omega_1(x) + dx \wedge \omega_2(x)
$$
where $\omega_1$ is a smooth family of $p$-forms on $N$ and $\omega_2$ is a smooth family of $(p-1)$-forms on $N$.
This gives a decomposition
$$
 \Omega^p_0([1,\infty)\times N)= \left( C^\infty_0([1,\infty)) \,\hat \otimes\, \Omega^p(N)  \right) \oplus \left( C^\infty_0([1,\infty)) \,\hat \otimes\, \Omega^{p-1}(N) \right),
$$
where $\hat \otimes$ is the injective tensor product of the nuclear topological vector spaces.
On the level of Hilbert spaces we have
\begin{gather*}
 L^2\Omega^p([1,\infty)\times N)= \left( L^2([1,\infty),x^{-\gamma_p} dx) \, \otimes\, L^2 \Omega^p(N)  \right) 
 \oplus \\ \left( L^2([0,\infty),x^{-\gamma_{p-1}} dx) \, \otimes\, L^2\Omega^{p-1}(N) \right),
\end{gather*}
where the tensor product symbol denotes the tensor product of Hilbert spaces and $\gamma_{p}=a(n-2p-1)$.
In $L^2\Omega^*(N)$ we have the Hodge decomposition
\[
L^2 \Omega^p(N) = \mathcal{H}^p(N)  \oplus \mbox{Im}(d_{p-1}) \oplus \mbox{Im}(\delta_{p+1}).
\]
Further, for each $p$
and each eigenvalue $\mu_i>0$ of $\Delta_N$ on $L^2 \Omega^p(N)$, we can choose $\mu_i$-eigenforms $\phi_p^i \in \mbox{Im}(d_{N,p-1})$ and $\{\psi_p^i\} \in \mbox{Im}(\delta_{N,p+1})$ in such a way that 
\begin{eqnarray}
d_{N,p-1}\psi_{p-1}^i&=& \sqrt{\mu_i} \phi_p^i\\
\delta_{N,p}\phi_{p}^i&=& \sqrt{\mu_i} \psi_{p-1}^i,
\end{eqnarray}
and such that the sets $\{\phi_i\}$ and $\{\psi_i\}$ form orthonormal bases for 
$\mbox{Im}(d_{N,p-1})$ and $\mbox{Im}(\delta_{N,p+1})$, respectively.
We can also choose orthonormal bases of harmonic forms 
$\{\theta_i\} \subset \mathcal{H}^p(N)$ and $\{\tilde{\theta}_i\} \subset \mathcal{H}^{p-1}(N)$.
We define the spaces of ``fibre harmonic forms"
\[
\mathcal{V}^p_{\mathcal{H}}:= \{ \sum_i \alpha_i \theta_i 
\mid \alpha_i \in L^2([1,\infty),x^{-\gamma_p} dx)\}
\]
\[
{\mathcal{W}^p_{\mathcal{H}} }:= \{ dx \wedge \sum_i \beta_i \tilde{\theta}_i 
\mid \beta_i \in L^2([1,\infty),x^{-\gamma_{p-1}} dx)\}.
\]

\noindent
Also, for each pair $\{ \phi_p^i, \psi_{p-1}^i\}$, 
we can define the subspace of 
$L^2\Omega^p([1,\infty)\times N)$:
\[
\mathcal{V}_i^p:= \left\{ \alpha_{i} \phi_p^i + dx \wedge \beta_{i} \psi_{p-1}^{i}
| \right. \mbox{\hspace{2in}}
\]
\[\mbox{\hspace{1in}}
\left. \alpha_i \in L^2([1,\infty),x^{-\gamma_p} dx), \,\,\, \beta_i \in L^2([1,\infty),x^{-\gamma_{p-1}} dx)
\right\},
\]
and for each pair of $\{ \psi_p^i, \phi_{p-1}^i\}$, we can define
\[
{\mathcal{W}_i^p}:= \left\{ \alpha_i \psi_p^i + dx \wedge \beta_i \phi_{p-1}^i
| \right. \mbox{\hspace{2in}}
\]
\[\mbox{\hspace{1in}}
\left. \alpha_i \in L^2([1,\infty),x^{-\gamma_p} dx), \,\,\, \beta_i \in L^2([1,\infty),x^{-\gamma_{p-1}} dx)
\right\}.
\]
Then we have the decomposition:
\begin{gather} \label{decomp}
  L^2 \Omega^p([1,\infty)\times N) = \mathcal{V}^p_{\mathcal{H}} \oplus {\mathcal{W}^p_{\mathcal{H}}}\oplus\left( \bigoplus_i \mathcal{V}^p_i \right) \oplus \left( \bigoplus_i {\mathcal{W}^p_i}\right).
\end{gather}
It is straightforward to show that the subspaces
$\mathcal{V}^p_{\mathcal{H}}$,  ${\mathcal{W}^p_{\mathcal{H}}}$, $ \mathcal{V}^p_i$
and ${\mathcal{W}^p_i}$ are invariant subspaces for $\Delta_{c,p}$ in the sense that
all spectral projections of $\Delta_{c,p}$ leave these direct summands invariant. This follows immediately
as the associated quadratic forms can be written as direct sums.

Recall that $\gamma_p = a(n-2p-1)$ and set
\begin{gather}\label{e5}
\mathcal{D}_{p}=\left(\begin{array}{cccc}
-\partial_{x}^{2}+\frac{\gamma_{p}}{x}\partial_{x} & 0 \\
0 & -\partial_{x}^{2}+\frac{\gamma_{p-1}}{x}\partial_{x}
\end{array}\right),\,\,\,
\mathcal{A}_p = \left( \begin{array}{cc}
0 & 0 \\
0 & -\frac{\gamma_{p-1}}{x^{2}}
\end{array}\right)
\end{gather}
and
\begin{gather}
\mathcal{U}=\left(\begin{array}{cccc}
0 & 1 \\
x^{2a}&0  \end{array}\right).
\end{gather}

The Laplacian takes the following forms on the subspaces above:
\begin{equation}
\label{eq:harmlap}
\Delta_{c,p}|_{ \mathcal{V}^p_{\mathcal{H}} \oplus \mathcal{W}^p_{\mathcal{H}} }=\mathcal{D}_{p} + \mathcal{A}_p,
\end{equation}
\[
\Delta_{c,p}|_{\mathcal{V}^p_{i}}=
\mathcal{D}_{p}+\mathcal{A}_p + \mu_i x^{2a}I+\sqrt{\mu_i}\frac{2a}{x}\mathcal{U},
\]
and
\[
\Delta_{c,p}|_{{\mathcal{W}^p_i}}=
\mathcal{D}_{p}+\mathcal{A}_p + \mu_i x^{2a}I.
\]

\subsection{The spectral theorem for generalized cusps}

The following is a detailed analysis of the continuous spectrum on the generalized cusp
and a description of the spectral subspaces. 

\begin{theorem} \label{ttt1}
 The absolutely continuous spectral subspace of $\Delta_{c,p}$ is
 $$
 H_{ac}^p:=\mathcal{V}^p_{\mathcal{H}} \oplus  \mathcal{W}^p_{\mathcal{H}} .
 $$
 In particular the spectrum of $\Delta_{c,p}$ restricted to $(H_{ac}^p)^\perp= \left( \bigoplus_i \mathcal{V}^p_i \right) \oplus \left( \bigoplus_i {\mathcal{W}^p_i} \right)$ is purely discrete.  Thus, the resolvent $(\Delta_{c,p}-\lambda)^{-1}$ restricted to the discrete spectral subspace is a meromorphic family
of compact operators  parametrized by $\lambda \in \mathbb{C}$. 
\end{theorem}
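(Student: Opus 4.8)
The plan is to use the orthogonal, reducing decomposition \eqref{decomp} together with the explicit formulas \eqref{eq:harmlap} for $\Delta_{c,p}$ on each summand, so that the problem collapses to the spectral analysis of ordinary differential operators on the half-line $[1,\infty)$ with the Dirichlet condition at $x=1$. The two types of summands behave oppositely, and I would treat them separately. The first step is to strip the first-order term from the Bessel-type operator $-\partial_x^2+\frac{\gamma}{x}\partial_x$: the unitary multiplication $f\mapsto x^{-\gamma/2}f$ from $L^2([1,\infty),x^{-\gamma}dx)$ to $L^2([1,\infty),dx)$ conjugates it to $-\partial_x^2+\frac{\gamma(\gamma+2)}{4}x^{-2}$. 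Carrying $\mathcal{A}_p$ along, the harmonic summand $\mathcal{V}^p_{\mathcal{H}}$ becomes the scalar Bessel operator $-\partial_x^2+(b_p^2-\tfrac14)x^{-2}$ and $\mathcal{W}^p_{\mathcal{H}}$ becomes $-\partial_x^2+((b_{p-1}-1)^2-\tfrac14)x^{-2}$, each with inverse-square coefficient $\geq-\tfrac14$; this is exactly where the order $b_p$ and the Hankel functions of Theorem \ref{th1} enter.

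On each harmonic summand one is then left with a half-line Bessel operator $-\partial_x^2+(\nu^2-\tfrac14)x^{-2}$ that is limit point at infinity, hence self-adjoint under the Dirichlet condition at $x=1$. I would diagonalize it by the explicit generalized eigenfunctions for $\lambda=k^2>0$, namely the combination of $\sqrt{x}J_\nu(kx)$ and $\sqrt{x}Y_\nu(kx)$ singled out by the condition at $x=1$; these give a Hankel-type spectral transform realizing the operator as multiplication by $\lambda$ on $L^2((0,\infty),d\lambda)$, so the spectrum on $\mathcal{V}^p_{\mathcal{H}}\oplus\mathcal{W}^p_{\mathcal{H}}$ is purely absolutely continuous and equal to $[0,\infty)$, with no singular continuous and no point spectrum. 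The absence of eigenvalues I would check by hand: for $\lambda<0$ the only solution square integrable at infinity is $\sqrt{x}K_\nu(\sqrt{-\lambda}\,x)$, which is strictly positive and so cannot meet the Dirichlet condition at $x=1$, while for $\lambda>0$ all solutions are oscillatory of size $x^{1/2}$ and thus fail to be $L^2$, ruling out embedded eigenvalues. Tensoring with the finite-dimensional spaces $\mathcal{H}^p(N)$, $\mathcal{H}^{p-1}(N)$ keeps the spectrum purely absolutely continuous, of finite multiplicity.

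For the summands $\mathcal{V}^p_i,\mathcal{W}^p_i$ the term $\mu_i x^{2a}I$ with $\mu_i>0$ supplies a confining potential. On $\mathcal{W}^p_i$ there is no coupling and the conjugated operator is the diagonal pair $-\partial_x^2+(\nu^2-\tfrac14)x^{-2}+\mu_i x^{2a}$ (with the two orders $\nu$ as above); since $\mu_i x^{2a}\to\infty$ the form domain embeds compactly into $L^2$, so the resolvent is compact and the spectrum discrete. On $\mathcal{V}^p_i$ the same diagonal conjugation turns the coupling $\sqrt{\mu_i}\frac{2a}{x}\mathcal{U}$ into the symmetric off-diagonal potential $2a\sqrt{\mu_i}\,x^{a-1}\left(\begin{smallmatrix}0&1\\1&0\end{smallmatrix}\right)$, using $\gamma_{p-1}-\gamma_p=2a$; this confirms self-adjointness and shows the coupling is of lower order $x^{a-1}$ than the diagonal confinement $x^{2a}$. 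The identity
\[
 \mu_i x^{2a}\mp 2a\sqrt{\mu_i}\,x^{a-1}=\big(\sqrt{\mu_i}\,x^{a}\mp a x^{-1}\big)^2-a^2 x^{-2}
\]
shows both eigenvalues of the matrix potential still tend to $+\infty$, so the potential is confining and the resolvent on $\mathcal{V}^p_i$ is again compact. Moreover the minimizer $x^{a+1}=a/\sqrt{\mu_i}$ of the completed square lies below $1$ once $\mu_i$ is large, so the potential increases on $[1,\infty)$ and its infimum is attained at $x=1$, giving $\inf\mathrm{spec}(\Delta_{c,p}|_{\mathcal{V}^p_i})\geq \mu_i-2a\sqrt{\mu_i}+O(1)$, and likewise on $\mathcal{W}^p_i$. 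Since $\mu_i\to\infty$, these bottoms escape to $+\infty$.

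Combining the pieces, the orthogonal complement $(H^p_{ac})^\perp=\bigoplus_i\mathcal{V}^p_i\oplus\bigoplus_i\mathcal{W}^p_i$ carries an operator each of whose summands has compact resolvent, and whose summand spectra start at levels tending to $+\infty$; hence the whole complement operator has compact resolvent and purely discrete spectrum accumulating only at $+\infty$, which identifies $H^p_{ac}=\mathcal{V}^p_{\mathcal{H}}\oplus\mathcal{W}^p_{\mathcal{H}}$ and gives, by analytic Fredholm theory, a meromorphic family of compact resolvents on $\mathbb{C}$. The main obstacle is the harmonic part: proving \emph{genuine} pure absolute continuity, i.e. simultaneously excluding embedded positive eigenvalues and any singular continuous component, is the delicate step, since the inverse-square tail $\nu^2 x^{-2}$ is only borderline short range and soft perturbation arguments are awkward here. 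It is cleanest to extract it from the explicit Bessel/Hankel spectral representation, which is precisely the analysis the later sections carry out.
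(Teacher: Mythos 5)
Your proposal is correct and takes essentially the same route as the paper: the same reducing decomposition, reduction of the fibre-harmonic part to half-line Bessel operators diagonalized by the Weber (cylinder-function) transform to get pure absolute continuity, and discreteness on each $\mathcal{V}^p_i$, $\mathcal{W}^p_i$ from the confining potential $\mu_i x^{2a}$ together with the lower bound of order $\mu_i - O(\sqrt{\mu_i})$ escaping to $+\infty$, so the direct sum still has compact resolvent. The only differences are cosmetic: you conjugate to Schr\"odinger form on $L^2(dx)$ and spell out the symmetrized coupling $2a\sqrt{\mu_i}\,x^{a-1}$ and the completing-the-square estimate, where the paper works with the $x^{b_p}$ substitution and invokes relative form-boundedness and the standard theory of operators with potentials growing at infinity.
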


\begin{theorem}\label{t2}
Suppose $\mathcal{H}^{p}(N) \oplus \mathcal{H}^{p-1}(N)$ has non-zero dimension.
Then the Laplacian $\Delta_{c,p}$ restricted to the space $H_{ac}^p=\mathcal{V}^p_{\mathcal{H}} \oplus  \mathcal{W}^p_{\mathcal{H}} $ has
domain
\[
Dom(\Delta_{c,p})=
\big(S\otimes \mathcal{H}^{p}(N)\big)\oplus dx\wedge\big(\tilde{S}\otimes\mathcal{H}^{p-1}(N)\big),
\]
where
\[
S=
\{\alpha(x)\in L^{2}([1,\infty),x^{-\gamma_{p}}dx)  :\lambda^{2}\mathbb{W}_{b_{p}}(x^{-b_{p}}\alpha(x))\in L^{2}\left( [0,\infty),d\mu_{b_{p}}(\lambda)\right)\},
\]
\[
\tilde{S}=
\{\beta(x)\in L^{2}([1,\infty),x^{-\gamma_{p-1}}dx): \hspace{2.5in}\]\[\hspace{1in}
 \lambda^{2}\mathbb{W}_{b_{p-1}-1}(x^{-b_{p-1}}\beta(x))\in L^{2}( [0,\infty),d\mu_{b_{p-1}-1}(\lambda))\},
 \]
$\mathbb{W}_b$ denotes the Weber transform of order $b$ (see Appendix), and $d\mu_{b}(\lambda)=\frac{\lambda}{\mathrm{J}_{b}^{2}(\lambda)+\mathrm{Y}_{b}^{2}(\lambda)} d\lambda$. 
\smallskip

\noindent
The spectrum of $\Delta_{c,p}$ can be decomposed as:
 \[
 \sigma_{sing}(\Delta_{c,p})=\emptyset ,\qquad \sigma_{ac}(\Delta_{c,p})=[0,\infty),  \mbox{ and }\,\,\,\sigma_{cont}(\Delta_{c,p})=[0,\infty)
.
\]  
Finally, restricted to $H_{ac}^p$, $\Delta_{c,p}$ has the following spectral decomposition:
\[
\Delta_{c,p}(\alpha\theta+dx\wedge\beta\tilde{\theta})= 
\]
\[
x^{b_{p}}\mathbb{W}_{b_{p}}^{-1}(\lambda^{2}\mathbb{W}_{b_{p}}(t^{-b_{p}}\alpha(t)))\theta+dx\wedge x^{b_{p-1}}\mathbb{W}_{b_{p-1}-1}^{-1}(\lambda^{2}\mathbb{W}_{b_{p-1}-1}(t^{-b_{p-1}}\beta(t)))\tilde{\theta}.
\]
\end{theorem}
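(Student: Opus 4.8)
The plan is to reduce the statement to two independent scalar Sturm--Liouville problems, recognise each—after a weight-adjusting conjugation—as the Dirichlet Bessel operator on $[1,\infty)$, and then invoke the spectral theory of the Weber transform from the appendix. First I would exploit the tensor-product structure of $H_{ac}^p=\mathcal{V}^p_{\mathcal{H}}\oplus\mathcal{W}^p_{\mathcal{H}}$. Since $\Delta_{c,p}$ acts as the identity on the finite-dimensional harmonic factors $\mathcal{H}^p(N)$ and $\mathcal{H}^{p-1}(N)$ and differentiates only in $x$, formula \eqref{eq:harmlap} shows that on $\mathcal{V}^p_{\mathcal{H}}$ the operator is the scalar operator $L_V=-\partial_x^2+\frac{\gamma_p}{x}\partial_x$ on $L^2([1,\infty),x^{-\gamma_p}dx)$, and on $\mathcal{W}^p_{\mathcal{H}}$ it is $L_W=-\partial_x^2+\frac{\gamma_{p-1}}{x}\partial_x-\frac{\gamma_{p-1}}{x^2}$ on $L^2([1,\infty),x^{-\gamma_{p-1}}dx)$. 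It therefore suffices to analyse these two operators separately and take the direct sum.

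Next I would conjugate $L_V$ by $\alpha\mapsto x^{-b_p}\alpha$ and $L_W$ by $\beta\mapsto x^{-b_{p-1}}\beta$. Because $b_p=\frac{\gamma_p+1}{2}$ and $b_{p-1}=\frac{\gamma_{p-1}+1}{2}$, the weight exponent satisfies $2b_p-\gamma_p=2b_{p-1}-\gamma_{p-1}=1$, so both conjugations are unitary isomorphisms onto $L^2([1,\infty),x\,dx)$. A direct computation of the conjugated operators then shows that $L_V$ becomes the Bessel operator $-\partial_x^2-\frac1x\partial_x+\frac{b_p^2}{x^2}$, while $L_W$, after using the algebraic identity $b_{p-1}^2-\gamma_{p-1}=(b_{p-1}-1)^2$, becomes $-\partial_x^2-\frac1x\partial_x+\frac{(b_{p-1}-1)^2}{x^2}$. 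This is precisely where the orders $b_p$ and $b_{p-1}-1$ in the theorem originate, and it is the only nonroutine calculation in this part of the argument.

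Finally I would identify the correct self-adjoint realisation and apply the Weber transform. The endpoint $x=1$ is a regular endpoint, so the Friedrichs extension imposes the Dirichlet condition there, while $x=\infty$ is in the limit-point case for these Bessel operators (the vanishing of the $1/x^2$ potential at infinity makes them asymptotically free), giving a unique self-adjoint extension; hence each conjugated operator is exactly the Dirichlet Bessel operator of the appropriate order. By the Plancherel theorem for the Weber transform, $\mathbb{W}_b$ is a unitary isomorphism from $L^2([1,\infty),x\,dx)$ onto $L^2([0,\infty),d\mu_b)$ that intertwines the Dirichlet Bessel operator of order $b$ with multiplication by $\lambda^2$. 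Transporting this back through the conjugations and reinstating the harmonic factors yields simultaneously the domain description of $S$ and $\tilde S$, the explicit spectral decomposition formula, and the spectral type: multiplication by $\lambda^2$ against the absolutely continuous measure $d\mu_b$ has no eigenvalues and no singular continuous part, so $\sigma_{sing}=\emptyset$ and $\sigma_{ac}=\sigma_{cont}=[0,\infty)$.

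The main obstacle is the Weber transform theory itself, deferred to the appendix: establishing that $\mathbb{W}_b$ is a unitary isomorphism onto $L^2([0,\infty),d\mu_b)$ with the stated inversion, and that it genuinely intertwines the Dirichlet Bessel operator with multiplication by $\lambda^2$. The associated Parseval/inversion identity, together with the explicit computation of the spectral measure $d\mu_b(\lambda)=\frac{\lambda}{\mathrm{J}_b^2(\lambda)+\mathrm{Y}_b^2(\lambda)}d\lambda$, is the analytic heart of the result. A secondary point requiring care is the verification that the Friedrichs extension $\Delta_{c,p}$ on $\mathcal{V}^p_{\mathcal{H}}\oplus\mathcal{W}^p_{\mathcal{H}}$ coincides with the Dirichlet realisation at $x=1$ together with the limit-point realisation at infinity, so that it is indeed the operator diagonalised by $\mathbb{W}_b$.
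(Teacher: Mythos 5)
Your proposal is correct and follows essentially the same route as the paper: decouple on the fibre-harmonic subspaces, substitute $\alpha = x^{b_p}f$, $\beta = x^{b_{p-1}}g$ (your weight conjugation, unitary since $2b_p-\gamma_p=1$) to obtain the Dirichlet Bessel operators of orders $b_p$ and $b_{p-1}-1$, and then invoke the Weber-transform diagonalization from the appendix to read off the domain, the spectral decomposition, and the purely absolutely continuous spectral type. The only difference is that you make explicit some points the paper leaves implicit (unitarity of the conjugation, limit-point behaviour at infinity, Friedrichs extension giving the Dirichlet condition), which strengthens rather than changes the argument.
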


\begin{proof}[Proof of Theorem \ref{ttt1} and \ref{t2}]
To show that the spectrum of $\Delta_{c,p}$ restricted to $\mathcal{V}^p_{\mathcal{H}} \oplus  \mathcal{W}^p_{\mathcal{H}} $
 is absolutely continuous, we first note from Equation (\ref{eq:harmlap}) that the operator $\Delta_{c,p} - \lambda$ decouples
completely on this space, since the operators $D_p$ and $\mathcal{A}_p$ are both diagonal.  Thus
it suffices to consider how $\Delta_{c,p}$ acts on a form $\alpha_i(x) \theta_i$ or
a form $dx \wedge \beta_i \tilde{\theta}_i(x)$.
We may transform the eigen-equation in these cases using the changes of variables  
$\alpha_i(x) = x^{b_p} f_i(x)$, $\beta_i(x) = x^{b_{p-1}} g_i(x)$, where
\[
b_{p}=\frac{\gamma_{p}+1}{2} = \frac{a(n-2p-1) +1}{2}.
\]
Then divide the $f_i$ equation by $-x^{b_p}$ and the $g_i$ equation by $-x^{b_{p-1}}$.
This turns the eigenform equations for $\alpha_i$ and $\beta_i$ into transformed Bessel equations: 
\begin{equation}
\label{bes1}
-f_i'' - \frac{1}{x} f_i' + \left[\frac{b_p^2}{x^2} - \lambda \right]f_i = 0
\end{equation}
and
\begin{equation}
\label{bes2}
-g_i'' - \frac{1}{x} g_i' + \left[\frac{(b_{p-1}-1)^2}{x^2}-\lambda \right]g_i = 0.
\end{equation}
Thus for any $\lambda \geq 0$ we have generalized eigenfunctions of
$\Delta_{c,p}|_{ \mathcal{V}^p_{\mathcal{H}}\oplus \mathcal{W}^p_{\mathcal{H}}  }$ of the form
\begin{gather}\label{e2}
\left(\begin{array}{cc}
x^{b_{p}}G_{b_{p}}(\sqrt{\lambda},x)\theta \\
x^{b_{p-1}}G_{b_{p-1}-1}(\sqrt{\lambda},x)\tilde{\theta}
\end{array}\right), \,\,\, \mbox{for any} \,\,\, \theta,\tilde{\theta}\in\mathcal{H}^p(N)\oplus\mathcal{H}^{p-1}(N),
\end{gather}
where $G_{b,\sqrt{\lambda}}(x)$ is the cylinder function of order $b$ (see Appendix).  
Using the properties of the Weber transform of order $b$, 
we can refine the description of the absolutely continuous part of the spectrum of 
$\Delta_{c,p}$ to the spectral theorem \ref{t2}.
This shows that the multiplicity of the continuous spectrum of 
$\Delta_{c,p}|_{\mathcal{V}^p_{\mathcal{H}}\oplus \mathcal{W}^p_{\mathcal{H}} }$ for $\lambda \geq 0$
is $\dim(\mathcal{H}^p(N)) + \dim(\mathcal{H}^{p-1}(N))$ (see also \cite{GM}).

Proving that the resolvent  for the Laplacian on 
$(\mathcal{V}^p_{\mathcal{H}}\oplus \mathcal{W}^p_{\mathcal{H}} )^\perp$ extends meromorphically
to $\mathbb{C}$ as a family of bounded operators in the Hilbert space
is equivalent to showing that its spectrum on this subspace is discrete.  We will do this
by showing that 
 the spectrum of $\Delta_{c,p}$ restricted to $\mathcal{V}^p_i$ and $\mathcal{W}^p_i$ is  discrete and
that the first eigenvalues  on these subspaces tend to infinity as $i \to \infty$. Note that both operators are of the form
$$
 \mathcal{D}_{p} + \mu_i x^{2a} + V_i(x),
$$
where $V_i= \mathcal{A}_p + \sqrt{\mu_i}\frac{k}{x}\mathcal{U}$, $k \geq 0$ is a constant, and $\mathcal{D}_{p}$
is positive.
We know that $x^{2a} \to +\infty$ as $x \to \infty$, so the operator
$\mathcal{D}_{p} + \mu_i x^{2a}$ is the Laplacian plus a potential that grows at infinity.
Reflecting through $x=1$, we reduce to the standard theory (see, e.g. \cite{RS}, vol. 4), to show
the operator has compact resolvent and discrete spectrum.  
Further, since $V_i(x)$ is relatively form-bounded with respect to $\mathcal{D}_{p} + \mu_i x^{2a}$, we also have that
$\Delta_{c,p}$ restricted to both $\mathcal{V}^p_i$ and $\mathcal{W}^p_i$ has compact resolvent.
Thus overall, $\Delta_{c,p}$ has discrete spectrum on $\mathcal{V}^p_i \oplus \mathcal{W}^p_i$ with no possible accumulation points except at $\infty$.
Since $\mu_i \to \infty$ as $i \to \infty$ the lowest eigenvalue of the matrix valued function 
$\mu_i x^{2a} + V_i(x)$ is easily seen to be bounded from below by $\mu_i -k' \sqrt{\mu_i}$ for some $k'>0$ and all $x \geq 1$.
Therefore, also the lowest eigenvalue of $\Delta_{c,p}$ restricted to $\mathcal{V}^p_i \oplus \mathcal{W}^p_i$ tends to
$\infty$ as $i \to \infty$, so the spectrum on the infinite sum of $\mathcal{V}^p_i \oplus \mathcal{W}^p_i$
is still discrete.
\end{proof}

\subsection{Meromorphic extension of the resolvent}

Using the spectral decomposition from Theorem \ref{t2} and the definition of the Weber transform, we get an explicit formula for the kernel of $\left(\Delta_{c,p}-u\right)^{-1}$ on its absolutely continuous subspace in terms of the Bessel functions $Y_b$ and $J_b$.  To do this, for $u\in \mathbb{C} \setminus [0,\infty)$, note that we have
\[
(\Delta_{c,p}-u)^{-1}(\alpha \theta + dx \wedge \beta \tilde{\theta})
=
x^{b_p}\mathbb{W}_{b_p}^{-1}\left( \frac{1}{\lambda^2-u} \mathbb{W}_{b_p}(t^{-b_p}\alpha(t))\right)
\theta
\]
\[ \hspace{1cm}
+
dx \wedge
x^{b_{p-1}}\mathbb{W}_{b_{p-1}-1}^{-1}\left( \frac{1}{\lambda^2-u} \mathbb{W}_{b_{p-1}-1}(t^{-b_{p-1}}\beta(t))\right)
\tilde{\theta}.
\]
If we let
\[
m_{b}(\lambda,u,x,t)=\frac{\lambda}{\lambda^{2}-u}\frac{G_{b}(\lambda,x)G_{b}(\lambda,t)}{\mathrm{J}_{b}^{2}(\lambda)+\mathrm{Y}_{b}^{2}(\lambda)},
\]
and assume that $\alpha, \beta \in C^\infty_0([1,\infty))$, then this becomes:
\begin{equation}
\label{eq:realresolv}
(\Delta_{c,p}-u)^{-1}(\alpha \theta + dx \wedge \beta \tilde{\theta})
=
\left(
\int_0^\infty \int_1^\infty x^{b_p}t^{1-b_p} m_{b_p}(\lambda, u, x, t) \alpha(t) \,dt \, d\lambda
\right)\theta
\end{equation}
\[ \hspace{1cm}
+
dx \wedge \left(
\int_0^\infty \int_1^\infty x^{b_{p-1}}t^{1-b_{p-1}} m_{b_{p-1}-1}(\lambda, u, x, t) \beta(t) \,dt \, d\lambda
\right)\tilde{\theta}.
\]

This formula can be used to construct a meromorphic continuation of the resolvent
of the operator using contour deformation similarly as in \cite{St}. The integral over $\lambda$
may however be explicitly computed, because the operator we consider is unitarily equivalent
to a direct sum of two Sturm-Liouville operators on the half line. 
Thus, we have an explicit formula for
the resolvent in terms of the fundamental system $G_b(\sqrt{u},x), H^{(1)}_b(x\sqrt{u})$.  
Here $G_b(\sqrt{u},x)$ satisfies the boundary conditions at $1$ and 
$H^{(1)}_b(t\sqrt{u})$ satisfies the $L^2$ condition at infinity.
In this way one obtains for the integral kernel of the resolvent
\begin{gather} \label{reskern}
(\Delta_{c,p}-u)^{-1}(\alpha \theta + dx \wedge \beta \tilde{\theta}) = 
\int_1^\infty x^{b_p}t^{-b_p} r_{b_p}(u,x,t) \alpha(t) dt \; \theta \\
+\int_1^\infty x^{b_p}t^{-b_p} r_{b_{p-1}}(u,x,t) \beta(t) dt \; dx 
\wedge \tilde \theta,\nonumber
\end{gather}
where
$$
 r_{b}(u,x,t) = \frac{\pi \sqrt{x t}}{2 
\mathrm{H}^{(1)}_b(\sqrt{u})}\left \{ \begin{matrix} 
G_{b}(\sqrt{u},x)\mathrm{H}^{(1)}_{b}(t\sqrt{u}) & 1 \leq x \leq t\\
 G_{b}(\sqrt{u},t)\mathrm{H}^{(1)}_{b}(x\sqrt{u}) & 1 \leq t \leq x .
\end{matrix} \right.
$$
Here $\frac{2}{\pi\sqrt{xt}}H^{(1)}_b(\sqrt{u})$ is the Wronskian of the two solutions. The above formula
may be checked by direct computation applying the differential operator to the integral kernel above.
The Hankel functions are not holomorphic at zero but lift to holomorphic functions on the logarithmic cover of the complex plane.
We therefore make a change of variable $u=e^{z}$ to account for that. With the convention $\sqrt{e^z}=e^{z/2}$
the functions $r_{b_p}(\mathrm{e}^z,x,t)$ are then meromorphic functions with poles at the zero set of $H^{(1)}_{b}(e^{z/2})$.
Now of course the resolvent does not continue meromorphically as a family of bounded operators on the original Hilbert space
but merely as a family of operators between weighted $L^2$ spaces. More precisely define
\[
H^p_{\mathcal{H},\pm}=\mathrm{e}^{\mp\frac{x^{2}}{2}}\Big(\mathcal{V}^p_{\mathcal{H}} \oplus \mathcal{W}^p_{\mathcal{H}} \Big).
\]
By the asymptotic behavior of Hankel functions (see Appendix \ref{besselapp}) and their complex derivatives it is easy to see that
the Equation (\ref{reskern}) defines a meromorphic continuation of
$$
 (\Delta_{c,p}-\mathrm{e}^z)^{-1}
$$
as a function with values in $\mathcal{B}(H^p_{\mathcal{H},+},H^p_{\mathcal{H},-})$ 
to the entire complex plane with at most simple poles at the points $z$ where the $\mathrm{H}^{(1)}_b(e^{z/2})$ vanishes.
Because $\Delta_{c,p}$ restricted
to $(H^p_{ac})^{\perp}$ has discrete spectrum, its resolvent family can also be lifted
trivially to the logarithmic cover.  Thus, putting the two families together yields an extension
of the resolvent family for the entire operator $\Delta_{c,p}$ to the logarithmic cover.
Thus, if we define the weighted $L^2$-spaces
$$
 H_{\pm}^p := \mathrm{e}^{\mp \frac{x^2}{2}} L^2([1,\infty)\times N,\Lambda^p [1,\infty)\times N),
$$
then the holomorphic family of operators
$$
 (\Delta_{c}-\mathrm{e}^z)^{-1}
$$
defined on $\{ z \mid \Im{z} \in (0,2\pi) \}$ extends to a meromorphic family of operators with values
in $\mathcal{B}(H^p_+,H^p_-)$ on the entire complex planes with at most simple poles of finite rank at
$Z_{b_p} \cup Z_{b_{p-1}}$, where
$$
 Z_b = \{z \mid \mathrm{H}^{(1)}(\mathrm{e}^{z/2})=0 \}.
$$
It was shown in \cite{MS} that the Hankel functions are Hahn-meromorphic at zero. Thus, the above also
implies that the resolvent continues near zero to a Hahn-meromorphic function with values in
$\mathcal{B}(H^p_+,H^p_-)$.

\section{Spectral theory for manifolds with generalized cusps} \label{sec:3}

We can use the description and properties of the extension $R_z(\Delta_{c})$ of the 
resolvent on the cusp end ($\Delta_c := \sum_p \Delta_{c,p}$) to prove that the resolvent operator for the Laplacian $\Delta_M$
on all of $M$ also extends meromorphically to the logarithmic cover of $\mathbb{C} \setminus \{0\}$.  We can then
construct generalized eigenfunctions for $\Delta_M$ and study properties
of the stationary scattering matrix for $M$.  To extend the resolvent, we essentially
compare the Laplacian
on $M$ with weighted $L^2$ boundary conditions to the Laplacian on $M$ with
the same boundary conditions at $x=\infty$, but with an additional Dirichlet
boundary condition at $x=1$. We keep denoting this last Laplacian by $\Delta_c$. 

\begin{proof} [Proof of Theorem \ref{t57}]
We use the following construction of the resolvent on $M$. 
We glue the extended resolvent family of $\Delta_c$, $R_z(\Delta_c)$,
to the extended resolvent family for $\Delta_{M_0}$ with Dirichlet conditions at $x=1$ (lifted to the 
logarithmic cover), $R_z(\Delta_{M_0})$,
with smooth cutoffs to make the sum act on all $L^2$ forms on $M$.
Let $\chi_{1}(x)$ be a smooth cutoff function on $\mathbb{R}$ that is zero
for $x\geq 1/2$ and 1 for $x \leq 5/8$.  Similarly, let $\chi_{2}(x)$ be a smooth cutoff function on $\mathbb{R}$ that is zero
for $x\geq 3/4$ and 1 for $x \leq 7/8$
and $\chi_5$ be a smooth cutoff function on $\mathbb{R}$ that is zero
for $x\geq 1/8$ and 1 for $x \leq 1/4$. Let $\chi_3 = 1-\chi_1$ and $\chi_4 = 1-\chi_5$.

Define
\begin{equation}
\label{eq:calQz}
\mathcal{Q}_z := \chi_{1}R_z(\Delta_{M_0})\chi_{2} +
\chi_3 R_z(\Delta_c) \chi_4.
\end{equation}
It is straightforward that this is now a meromorphic function
on $\mathbb{C}$ with values in $\mathcal{L}(H_{+},H_{-})$ with simple poles and residues that are finite rank operators. 
Moreover, on the physical sheet the resolvent family of $\Delta_M$ can
be expressed as
\[
R_z(\Delta_M) :=(I+T_{z})^{-1}\mathcal{Q}_{z},
\]
where $T_{z}=\mathcal{Q}_{z}(\Delta_{M}-\mathrm{e}^z)-I$, which is meromorphic with values in 
$\mathcal{L}(H_{-},H_{-})$. By construction $T_z$ has support off the diagonal.  
We can see this as follows.  
Note that
\begin{gather*}
\chi_1\chi_2 + \chi_3\chi_4 =1.
\end{gather*}
Also note that the distance from the support of $\chi_1$ to the support of $\nabla(\chi_2)$ 
(and thus also to the support of $[\chi_2,\Delta_M]$) is
greater than 1/8, as is the distance of the support of $\chi_3$ to the support of $\nabla(\chi_4)$
(and thus also to the support of $[ \chi_4,\Delta_M]$).
Now we get
\[
T_z = \left(
\chi_1R_z(\Delta_{M_0})\chi_2 +
\chi_3 R_z(\Delta_c) \chi_4
\right)(\Delta_M - \mathrm{e}^z)-I 
\]
\[
=\chi_1\chi_2 + \chi_3\chi_4 + 
\chi_1R_z(\Delta_{M_0})[\chi_2,\Delta_M] +
\chi_3 R_z(\Delta_c) [\chi_4,\Delta_M] - I
\]
\[
= \chi_1R_z(\Delta_{M_0})[\chi_2,\Delta_M] +
\chi_3 R_z(\Delta_c) [\chi_4,\Delta_M].
\]
The integral kernel associated to $T_z$ will thus be
\[
K_{T_z}(x,y) = \chi_1(x) K_{z,M_0}(x,y)[\chi_2,\Delta_M](y) +
\chi_3(x) K_{z,c}(x,y) [\chi_4,\Delta_M](y),
\]
where $K_{z,M_0}$ and $K_{z,c}(x,y)$ are the integral kernels for $R_z(\Delta_{M_0})$ and $R_z(\Delta_c)$, 
respectively.  Since the support in $x$ and the support in $y$ are disjoint, the support of $K_{T_z}$ is 
disjoint from the diagonal.  Since $T_z$ is a pseudodifferential operator, this implies it has smooth 
kernel, and is a smoothing operator.  We can also note that $K_{T_z}$ has compact support in $y$.  
Consequently, $T_z$ is a meromorphic family of compact operators. 
By the meromorphic Fredholm theorem, if $I + T(z)$ is invertible at some point, then
in fact $(I+T_{z})^{-1}$ is a meromorphic
family of bounded operators in $\mathcal{L}(H_{-},H_{-})$ with poles of finite rank.  
Then the above formula will define a meromorphic continuation, $R_z(\Delta_M)$, of the 
Laplacian on all of $M$
to the same domain for which $R_z(\Delta_{M_0})$ is meromorphic, namely, the logarithmic
cover of $\mathbb{C} - \{0\}$.

Thus it remains only to show that $T_z$ has small norm
as an element of $\mathcal{L}(H_{-},H_{-})$ for some $z$ on the physical sheet with $\Im(\mathrm{e}^z)>>0$.
For such $z$, $R_z(\Delta_{M_0})$  is bounded as an operator on $L^2(M_0)$ and 
$R_z(\Delta_c)$ is bounded as an operator from $H_{+}$ to $H_{-}$
by $k (\rm{dist}(z, \sigma(\Delta)))^{-1} \leq k (\Im \mathrm{e}^z)^{-1}$.  The operators $\chi_1$, $\chi_3$, 
$[\chi_2, \Delta]$ and $[\chi_4, \Delta]$ are compactly supported differential operators of degree $\leq 1$.  Using
the usual resolvent bounds, we get that the norm of the operator $T_z \in \mathcal{L}(H_{-},H_{-})$ goes
to 0 as $\Im(e^z) \to \infty$.  Thus this norm can be made as small as we like, and we are done.
\end{proof}
 
For any $z \in \mathbb{C}$ and any     $(\theta,\tilde{\theta})\in\mathcal{H}^{p}(N)\oplus\mathcal{H}^{p-1}(N)$
we can use this resolvent extension to construct generalized $\mathrm{e}^z$-eigenforms for $\Delta_M$, 
which we will denote by  $E_z(y,\theta,\tilde{\theta})$.  Recall that on the cusp, the kernel of $(\Delta_p-u)$ on 
fibre-harmonic forms
with no boundary conditions imposed is spanned by forms $x^{b_p} J_{b_p}(\sqrt{u} x) \theta$,
$x^{b_p} Y_{b_p}(\sqrt{u} x) \theta$, $dx \wedge x^{b_{p-1}} J_{b_{p-1}-1} (\sqrt{u} x) \tilde{\theta}$
and $dx \wedge x^{b_{p-1}} Y_{b_{p-1}-1} (\sqrt{u} x) \tilde{\theta}$, where $\theta$ is any harmonic
$p$-form on $N$ and $\tilde{\theta}$ is any harmonic $p-1$-form.  We can choose a new basis using
Hankel functions instead.  We can also express these in terms of the variable $\mathrm{e}^z=u$.
\[
x^{b_p} H^{(1)}_{b_p}(\mathrm{e}^{z/2} x) \theta, \qquad x^{b_p} H^{(2)}_{b_p}(\mathrm{e}^{z/2} x) \theta,
\]
\[
dx \wedge x^{b_{p-1}} H^{(1)}_{b_{p-1}-1}(\mathrm{e}^{z/2} x) \tilde{\theta}, \qquad dx \wedge x^{b_{p-1}} 
H^{(2)}_{b_{p-1}-1}(\mathrm{e}^{z/2} x) \tilde{\theta}.
\]
Note that when $z$ is in the physical sheet, the $H^{(1)}$ basis functions are in $L^2$ at infinity, and the 
$H^{(2)}$ basis functions are not, as can be observed from the Hankel asymptotics with complex argument
recalled in the Appendix.  

Now we may define the generalized eigenforms on $M$.   Define a cutoff function $\chi \in C^{\infty}(\mathbb{R})$, 
such that $\chi(x)=0$ if $x<1$ and $\chi(x)=1$ if $x>1+\varepsilon$, for some $\varepsilon>0$ sufficiently small. Set
\[
\omega_{\chi}(y)= \left\{
\begin{array}{ll}
0 & y \in M_0 \\
\chi(x)\Big(x^{b_{p}}H_{b_{p}}^{(2)}(\mathrm{e}^{z/2} x)\theta+dx\wedge x^{b_{p-1}}H_{b_{p-1}-1}^{(2)}(\mathrm{e}^{z/2} x)\tilde{\theta}\Big)
& y=(x,\zeta).
\end{array} \right.
\]
This form satisfies $(\Delta_{c,p} - \mathrm{e}^z) \omega_{\chi}=0$ for $x>1+\epsilon$, although the form
without the cutoff would not satisfy the Dirichlet boundary conditions of $\Delta_{c,p}$.  
Then for $z$ in the 
logarithmic cover, define a $p$-form on $M$ by:
\begin{equation}
\label{d11}
E_{z,\chi}(y,\theta,\tilde{\theta}):=\omega_{\chi}-R_{z}(\Delta_M)(\Delta_M-\mathrm{e}^zI)\omega_{\chi}.
\end{equation}
Since $(\Delta_M-\mathrm{e}^zI)\omega_{\chi}=0$ for $x>1+\varepsilon$,  we have that 
$(\Delta_M-\mathrm{e}^zI)\omega_{\chi}$ is compactly supported, so it is in the domain of the resolvent 
$R_{\lambda}(\Delta_M)$, and $E_{z,\chi}(y,\theta,\tilde{\theta})$ is well defined. 
The $p$-form $E_{z,\chi}(y,\theta,\tilde{\theta})$ is a generalized $\mathrm{e}^{z}$-eigenform for 
$\Delta_M$ because by construction, $E_{z,\chi}(y,\theta,\tilde{\theta})$ is smooth over $M$ and meromorphic over 
$\mathbb{C}$.
Since $R_{z}(\Delta_M)$ is the right inverse of $\Delta_M-\mathrm{e}^zI$ for $z$ in the 
physical sheet, we have 
$$(\Delta_M-\mathrm{e}^zI)E_{z,\chi}(y,\theta,\tilde{\theta})= (\Delta_M-\mathrm{e}^zI)
(\omega_{\chi}-R_{z}(\Delta_M)(\Delta_M-\mathrm{e}^zI)\omega_{\chi})=0$$
in this region, and so by the meromorphicity of $E_{z,\chi}(y,\theta,\tilde{\theta})$ over $z$, 
we have that $(\Delta_M-\mathrm{e}^zI)E_{z,\chi}(y,\theta,\tilde{\theta})=0$ for all $z \in \mathbb{C}$. 

By our choice of second Hankel functions in the definition, for $z$ in the physical sheet, i.e. 
$0\leq \Im z <\pi$, we have that $\omega_{\chi}\notin L^{2}(M,\wedge^{p}T^{\ast}M)$.
The extension of the resolvent, $R_z(\Delta_M)$
is only the left inverse of $(\Delta_M-\mathrm{e}^zI)$ for $L^2$ forms, so the family
$E_{z,\chi}(y,\theta,\tilde{\theta})$ is not identically zero in this region. 
This completes the proof of parts (1) and (2) of 
Theorem \ref{th1}.

We can notice several properties of this family $E_{z,\chi}(y,\theta,\tilde{\theta})$.
First, the family $E_{z,\chi}(y,\theta,\tilde{\theta})$ does not depend
on the choice of cutoff function $\chi$ in the definition.  To see this, 
note that for difference of families constructed using two different choices of $\chi$ we
get that
\[
\Delta_M(E_{z,\chi}(y,\theta,\tilde{\theta})-E_{z,\tilde{\chi}}(y,\theta,\tilde{\theta})) = \mathrm{e}^z(E_{z,\chi}(y,\theta,\tilde{\theta})-E_{z,\tilde{\chi}}(y,\theta,\tilde{\theta}))
\]
for $\mathrm{e}^z \notin [0,\infty)$.  Further, the difference 
$$(E_{z,\chi}(y,\theta,\tilde{\theta})-E_{z,\tilde{\chi}}(y,\theta,\tilde{\theta})) \in
L^{2}(M,\wedge^{p}T^{\ast}M).$$  These two facts together imply that for such $z$, the difference
must be zero, so the extensions are equal. 
Then by the meromorphic dependence of $E$ in $z$, we get that they are equal everywhere 
in $\mathbb{C}$.  Thus we may simply write
\[
E_z(y,\theta,\tilde{\theta}) := E_{z,\chi}(y,\theta,\tilde{\theta})
\]
for any choice of $\chi$.

Second, for $z$ in the physical sheet, $E_z(y,\theta,\tilde{\theta})$
differs from $\omega_\chi$ on the cusp by an $L^2$ form.  This is because
$$
E_z(y,\theta,\tilde{\theta})-\omega_{\chi} = -R_{z}(\Delta_M)(\Delta_M-\mathrm{e}^zI)\omega_{\chi},
$$ 
and on the physical sheet, $R_z(\Delta_M)$ is a bounded map on $L^2$ forms, and 
$(\Delta_M-\mathrm{e}^zI)\omega_{\chi}$ is smooth and compactly supported.  
Since for such $z$ the basis elements involving Hankel functions $H_{b}^{(1)}(w)$ 
form a fundamental system for the 
Laplacian on the cusp acting on $L^2$ fibre harmonic forms, we
obtain an expansion on the cusp for $E_z((x,\zeta),\theta,\tilde{\theta})$ for $x>1+\varepsilon$ of the form:
\begin{gather}\nonumber
E_z((x,\zeta),\theta,\tilde{\theta})=
x^{b_{p}}\mathrm{H}^{(2)}_{b_{p}}(\mathrm{e}^{z/2} x)\theta+dx\wedge x^{b_{p-1}}
\mathrm{H}^{(2)}_{b_{p-1}-1}(\mathrm{e}^{z/2} x)\tilde{\theta}
\\\label{krtdt}
+x^{b_{p}}\mathrm{H}^{(1)}_{b_{p}}(\mathrm{e}^{z/2} x)\eta_{p,z}(\theta,\tilde\theta)
+dx\wedge x^{b_{p-1}}\mathrm{H}^{(1)}_{b_{p-1}-1}(\mathrm{e}^{z/2} x)\tilde \eta_{p-1,z}(\theta,\tilde\theta)+\Psi_{z}((x,\zeta),\theta,\tilde{\theta}),
\end{gather}
where
\[
\Psi_{z}((x,\zeta),\theta,\tilde{\theta})\in L_{d,\delta}^{2}([1,\infty) \times N,\wedge^{p}T^{\ast}[1,\infty) \times N)
\oplus L_{\delta,d}^{2}([1,\infty) \times N,\wedge^{p}T^{\ast}[1,\infty) \times N), 
\]
\[
L_{d,\delta}^{2}([1,\infty) \times N,\wedge^{p}T^{\ast}[1,\infty) \times N):=\left( \bigoplus_i \mathcal{V}^p_i \right), 
L_{\delta,d}^{2}([1,\infty) \times N,\wedge^{p}T^{\ast}[1,\infty) \times N):=\left( \bigoplus_i {\mathcal{W}^p_i}\right),
\]  
and where  $\eta_{p,z}(\theta,\tilde\theta)$ and $\tilde \eta_{p,z}(\theta,\tilde\theta)$ are in $\mathcal{H}^{p}(N)$
and depend linearly on $(\theta,\tilde\theta)$.  According to the above decomposition, we write
\[
\Psi_{z}((x,\zeta),\theta,\tilde{\theta}) = \Psi_{z, d,\delta}((x,\zeta),\theta,\tilde{\theta})+\Psi_{z,\delta,d}((x,\zeta),\theta,\tilde{\theta}).
\]

Any generalized eigenfunction $E'_z((x,\zeta),\theta,\tilde{\theta})$ that depends meromorphically on $z$ and has an expansion
of the above type with tail term in $L^2$ is automatically equal to $E_z((x,\zeta),\theta,\tilde{\theta})$. This follows from self-adjointness
of the Laplace operator. Indeed, $E'_z((x,\zeta),\theta,\tilde{\theta})-E_z((x,\zeta),\theta,\tilde{\theta})$ is an $L^2$-form
on $M$ for any complex $z$ and it is in the eigenspace of the Laplace operator with eigenvalue $\mathrm{e}^z$. For all $z$ with $\mathrm{e}^z$
non-real it follows that $E'_z((x,\zeta),\theta,\tilde{\theta})-E_z((x,\zeta),\theta,\tilde{\theta})=0$. Since the dependence on $z$
is meromorphic the difference vanishes everywhere. Note that this also proves the uniqueness statement of Theorem \ref{th1}.

\begin{lemma}
The generalized eigenfunction satisfies the relation
\begin{gather*}
 d E_z(y,\theta,\tilde{\theta}) = E_z(y,0,\mathrm{e}^{z/2}\theta),\\
 \delta E_z(y,\theta,\tilde{\theta}) = E_z(y,\mathrm{e}^{z/2}\theta,0)
\end{gather*}
and moreover, the functions $ \eta_{p,z}$ and $\tilde \eta_{p,z}$ appearing in Equation \ref{krtdt} satisfy:
\begin{gather*}
  \eta_{p,z}(\theta,\tilde \theta)=\tilde{\eta}_{p,z}(0,\theta)= \eta_{p,z}(\theta,0),\\
  \tilde \eta_{p,z}(\theta,\tilde \theta)=\eta_{p,z}(\tilde \theta,0)=\tilde \eta_{p,z}(0,\tilde \theta).
\end{gather*}
\end{lemma}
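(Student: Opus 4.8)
The plan is to exploit that $d$ and $\delta$ commute with the Hodge Laplacian and then invoke the uniqueness statement for generalized eigenforms proved just after Equation \ref{krtdt}. First I would observe that $dE_z(y,\theta,\tilde\theta)$ is a $(p+1)$-form which is smooth in $y$ and meromorphic in $z$, and that since $\Delta_M d = d\Delta_M$ it satisfies $(\Delta_{M,p+1}-\mathrm{e}^z I)dE_z = d(\Delta_{M,p}-\mathrm{e}^z I)E_z = 0$; the same holds for $\delta E_z$ in degree $p-1$ using $\Delta_M\delta = \delta\Delta_M$. Thus both $dE_z$ and $\delta E_z$ are again generalized eigenforms, and it remains only to identify their harmonic data by computing their asymptotic expansions on the cusp and checking that the tails stay in $L^2$.

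For the cusp expansion I would use the warped-product formula $d(\omega_1 + dx\wedge\omega_2) = d_N\omega_1 + dx\wedge(\partial_x\omega_1 - d_N\omega_2)$ together with the harmonicity $d_N\theta = d_N\tilde\theta = 0$, which annihilates the $d_N$-terms on the fibre-harmonic part, and the Bessel recursion $\frac{d}{dw}(w^\nu C_\nu(w)) = w^\nu C_{\nu-1}(w)$ valid for $C_\nu = \mathrm{H}^{(1)}_\nu,\mathrm{H}^{(2)}_\nu$. A short computation with $w = \mathrm{e}^{z/2}x$ gives $\partial_x(x^{b_p}\mathrm{H}^{(i)}_{b_p}(\mathrm{e}^{z/2}x)) = \mathrm{e}^{z/2}x^{b_p}\mathrm{H}^{(i)}_{b_p-1}(\mathrm{e}^{z/2}x)$, so that applying $d$ to the $\theta$-terms of \ref{krtdt} produces exactly $dx\wedge x^{b_p}\mathrm{H}^{(i)}_{b_p-1}(\mathrm{e}^{z/2}x)(\mathrm{e}^{z/2}\theta)$, while the $dx\wedge\tilde\theta$-terms are killed. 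Hence $dE_z$ has a cusp expansion of the form required in Theorem \ref{th1} in degree $p+1$, with leading ($\mathrm{H}^{(2)}$) data exactly $(0,\mathrm{e}^{z/2}\theta)$; the analogous computation for $\delta$, using $\frac{d}{dw}(w^{-\nu}C_\nu(w)) = -w^{-\nu}C_{\nu+1}(w)$ and the warped-product formula for $\delta$, yields the data for the right-hand side of the second relation.

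The step I expect to be the main obstacle is showing that the tail terms $d\Psi_z$ and $\delta\Psi_z$ still lie in the $L^2$ complement $(H_{ac})^\perp$ of the appropriate degree, as needed to apply uniqueness. For this I would verify by direct computation on the modes $\mathcal{V}_i^p,\mathcal{W}_i^p$ — using $\phi_p^i\in\mathrm{Im}(d_{N,p-1})$, $d_{N,p-1}\psi_{p-1}^i=\sqrt{\mu_i}\phi_p^i$ and $\delta_{N,p}\phi_p^i=\sqrt{\mu_i}\psi_{p-1}^i$ — that $d$ maps $\mathcal{V}_i^p\to\mathcal{W}_i^{p+1}$ and $\mathcal{W}_i^p\to\mathcal{V}_i^{p+1}$ (and symmetrically for $\delta$), so that no fibre-harmonic component is created. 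That the image is genuinely in $L^2$ then follows from the identity $\|d\omega\|^2+\|\delta\omega\|^2=\langle\Delta_c\omega,\omega\rangle$ and the fact that $\Psi_z$, lying in the discrete part where $\Delta_c$ has compact resolvent, belongs to the domain of $\Delta_c^{1/2}$.

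Finally, with both identities $dE_z(\theta,\tilde\theta)=E_z(0,\mathrm{e}^{z/2}\theta)$ and $\delta E_z(\theta,\tilde\theta)=E_z(\mathrm{e}^{z/2}\theta,0)$ in hand via uniqueness — self-adjointness forces the $L^2$ difference at non-real $\mathrm{e}^z$ to vanish, and meromorphy propagates this to all $z$ — I would read off the relations between $\eta_{p,z}$ and $\tilde\eta_{p,z}$ by comparing the $\mathrm{H}^{(1)}$-coefficients on the two sides. Matching the $\mathrm{H}^{(1)}$ term produced by $d$ against that of $E_z(0,\mathrm{e}^{z/2}\theta)$ gives $\mathrm{e}^{z/2}\eta_{p,z}(\theta,\tilde\theta)=\tilde\eta_{p,z}(0,\mathrm{e}^{z/2}\theta)=\mathrm{e}^{z/2}\tilde\eta_{p,z}(0,\theta)$, i.e. $\eta_{p,z}(\theta,\tilde\theta)=\tilde\eta_{p,z}(0,\theta)$; since the right side is independent of $\tilde\theta$, setting $\tilde\theta=0$ yields the full chain $\eta_{p,z}(\theta,\tilde\theta)=\tilde\eta_{p,z}(0,\theta)=\eta_{p,z}(\theta,0)$, and the $\delta$-identity produces the companion chain for $\tilde\eta_{p,z}$.
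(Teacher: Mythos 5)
Your proposal follows essentially the same route as the paper's proof: commutation of $d$ and $\delta$ with $\Delta_M$, the warped-product formula for $d$ together with the Hankel recursion $\frac{d}{dw}\left(w^{b}\mathrm{H}^{(i)}_{b}(w)\right)=w^{b}\mathrm{H}^{(i)}_{b-1}(w)$ to compute the cusp expansion of $dE_z$, then the uniqueness statement of Theorem \ref{th1} to conclude $dE_z(y,\theta,\tilde{\theta})=E_z(y,0,\mathrm{e}^{z/2}\theta)$, and finally comparison of the $\mathrm{H}^{(1)}$-coefficients. The one place you go beyond the paper is your explicit treatment of why the differentiated tail $d\Psi_z$ stays in $L^2$ (the paper invokes uniqueness without comment); your mode computation $d:\mathcal{V}_i^p\to\mathcal{W}_i^{p+1}$, $\mathcal{W}_i^p\to\mathcal{V}_i^{p+1}$ is correct, but note that $\Psi_z$ need not satisfy the Dirichlet condition at $x=1$, so rather than placing it in the form domain of $\Delta_c$ one should either cut off away from the boundary or use that $E_z-\omega_\chi=-R_z(\Delta_M)(\Delta_M-\mathrm{e}^z)\omega_\chi$ lies in $\mathrm{Dom}(\Delta_M)\subset\mathrm{Dom}(d)\cap\mathrm{Dom}(\delta)$ on the physical sheet.
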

\begin{proof}
Since $\Delta_M$ commutes with $d$, $dE_{z}(y,\theta,\tilde{\theta})$ is an 
$\mathrm{e}^{z}$-generalized eigenform of the Laplacian. If the tail term $\Psi_{z}(y,\theta,\tilde{\theta})$ is decomposed by 
\[
\Psi_{z}(y,\theta,\tilde{\theta})=\Psi_{1,z}(y,\theta,\tilde{\theta})+dx\wedge\Psi_{2,z}(y,\theta,\tilde{\theta}),
\]
then on the cusp we have
\begin{gather*}
dE_{z}((x,\zeta),\theta,\tilde{\theta})=d_{N}\Psi_{1,z}((x,\zeta),\theta,\tilde{\theta})
+dx\wedge\Bigg(\partial_{x}\Big(x^{b_{p}}\mathrm{H}^{(2)}_{b_{p}}(\mathrm{e}^{z/2} x)\theta
\\
+x^{b_{p}}\mathrm{H}^{(1)}_{b_{p}}(\mathrm{e}^{z/2} x)\eta_{p,z}(\theta,\tilde \theta) \Big)+\partial_{x}\Psi_{1,z}((x,\zeta),\theta,\tilde{\theta})-d_{N}\Psi_{2,z}((x,\zeta),\theta,\tilde{\theta})\Bigg)
\\
=dx\wedge\Big(\mathrm{e}^{z/2} x^{b_{p}}\mathrm{H}^{(2)}_{b_{p}-1}(\mathrm{e}^{z/2} x)\theta+\mathrm{e}^{z/2} x^{b_{p}}\mathrm{H}^{(1)}_{b_{p}-1}(\mathrm{e}^{z/2} x) \eta_{p,z}(\theta,\tilde \theta) \Big)
\\
+d_{N}\Psi_{1,z}((x,\zeta),\theta,\tilde{\theta})+dx\wedge\Big(\partial_{x}\Psi_{1,z}((x,\zeta),\theta,\tilde{\theta})-d_{N}\Psi_{2,z}((x,\zeta),\theta,\tilde{\theta})\Big), 
\end{gather*}
where we have used the following relations for the Hankel functions
\[
\frac{d}{dz} \left(z^b \mathrm{H}_{b}^{(1)}(z)\right)=  z^b \mathrm{H}^{(1)}_{b-1}(z) \,\,\, \mbox{and} \,\,\, \frac{d}{dz} \left(z^b \mathrm{H}_{b}^{(2)}(z)\right)=  z^b \mathrm{H}^{(2)}_{b-1}(z) .\]
Here the branch cut of $z^b$ is chosen to coincide with the branch cut of the Hankel functions.
By uniqueness and part 1) of Theorem \ref{th1} applied with $p$ replaced by $p+1$ the generalized eigenfunction 
$E_{z}((x,\zeta),0,\mathrm{e}^{z/2}\tilde{\theta})$ has the  expansion 
\begin{gather*}
E_{z}((x,\zeta),0,\mathrm{e}^{z/2}\theta)=dx\wedge\Big(\mathrm{e}^{z/2} x^{b_{p}}\mathrm{H}^{(2)}_{b_{p}-1}(\mathrm{e}^{z/2} x)\theta+\mathrm{e}^{z/2} x^{b_{p}}\mathrm{H}^{(1)}_{b_{p}-1}(\mathrm{e}^{z/2} x) \tilde \eta_{p,z}(\theta,\tilde \theta) \Big) \\+\Psi_{z}((x,\zeta),0, \mathrm{e}^{z/2}\theta),
\end{gather*}
and by uniqueness in Theorem  \ref{th1}
we end up with the equation
\begin{gather}
 d E_z(y,\theta,\tilde{\theta}) = E_z(y,0,\mathrm{e}^{z/2}\theta).
\end{gather}
Comparing coefficients in the expansion 
we get $\eta_{p,z}(\theta,\tilde \theta)=\tilde{\eta}_{p,z}(0,\theta)$. 
Exactly the same argument applied to $\delta E_z(y,\theta,\tilde{\theta})$ completes the proof.
\end{proof}

This allows us to define the stationary scattering operator for $\Delta_M$ at $z$ by
\begin{gather}
 C_{p,z}(\theta):= \eta_{p,z}(\theta,0).
\end{gather}
The scattering matrix
 $C_{p,z}$ is uniquely determined by the choice of the solution $H^{(2)}_{b}(z)$ to define
$\omega_\chi$ in Equation (\ref{d11}). Note that since the $\varepsilon$ in the definition of the cutoff function $\chi$ used in this equation can be 
chosen arbitrarily small, and since $E_z(y,\theta,\tilde{\theta})$ is independent of $\chi$, the expansion (\ref{krtdt}) holds for $x>1$.
By construction, $C_{p,z}$ is meromorphic in $z$ with values in $\mathrm{End}(\mathcal{H}^{p}(N))$. This completes the proof that
there is an expansion of the form as claimed in Theorem \ref{th1}.

All that remains to show of (3) in Theorem \ref{th1} is that that the tail term $\Psi_z(y,\theta,\tilde{\theta})$ in the expansion of $E_z(y,\theta,\tilde{\theta})$ is not merely in $L^2$, but in fact decays exponentially in $x$ on the cusp for any $z \in \mathbb{C}$. Since 
\[ \Psi_z(y,\theta,\tilde{\theta})=\Psi_{z,d,\delta}(y,\theta,\tilde{\theta})\oplus\Psi_{z,\delta,d}(y,\theta,\tilde{\theta}),
\]
it suffices to show that $\Psi_{z,d,\delta}(y,\theta,\tilde{\theta})$ and $\Psi_{z,\delta,d}(y,\theta,\tilde{\theta})$ both decay exponentially in $x$ on the cusp. 
We prove this for the term $\Psi_{z,d,\delta}(y,\theta,\tilde{\theta})$, since the proof for the second term
is similar, but in fact easier as the equations for this term decouple.

The form $\Psi_{z,d,\delta}(y,\theta,\tilde{\theta})$ satisfies the eigenvalue equation for $\Delta|_{[1,\infty) \times N}$, 
so by the decomposition in Section 2, if we set 
$\Psi_{z,d,\delta}((x,\zeta),\theta,\tilde{\theta})=\sum_{i}\alpha_{i}\phi^{i}+dx\wedge\beta_{i}\tilde{\psi}^{i}$, 
and apply the transformation $\alpha_{i}(x)=x^{\gamma_{p}/2}w_{i}(x)$ and $\beta_{i}(x)=x^{\gamma_{p-1}/2}v_i(x)$, 
we get that $w_i$ and $v_i$ satisfy the system:
\begin{gather}\nonumber
-w''_{i}(x)+\mathcal{P}_i(x)w_{i}(x)+q_i(x)v_i(x)=0
\\ 
-v''_{i}(x)+\mathcal{R}_i(x)v_i(x)+q_i(x)w_{i}(x)=0,\label{e45}
\end{gather} 
where 
\[
\mathcal{P}_i(x)=x^{2a}\mu_{i}^{2}+\frac{\gamma_{p}(\gamma_{p}+2)}{4x^{2}}-\mathrm{e}^{z} , \,\,\, 
\mathcal{R}_i(x)=x^{2a}\mu_{i}^{2}+\frac{\gamma_{p-1}(\gamma_{p-1}-2)}{4x^{2}}-\mathrm{e}^{z} 
\]
and
\begin{equation}
q_i(x)=2a\mu_{i}^{2}x^{a-1}.
\end{equation}
Let $\mathcal{Y}_i=(w_{i}(x),v_i(x),w'_{i}(x),v'_{i}(x))$.  Then we get the equivalent first order system
\begin{equation}\label{gtr}
\mathcal{Y}'_{i}=\mathcal{F}_{i}\mathcal{Y}_{i}, \,\,\, \mbox{where} \,\,\, \mathcal{F}_{i}=\left( \begin{array}{cccc} 0 
& 1 \\ \mathcal{T}_{i} & 0 \end{array} \right) \,\,\, \mbox{and} \,\,\,   \mathcal{T}_{i}=\left( \begin{array}{cccc} \mathcal{P}_i 
& q_i \\ q_i & \mathcal{R}_i \end{array} \right).
\end{equation}
The eigenvalues of $\mathcal{F}_{i}$, indexed by $j$, and their corresponding eigenvectors are given by 
\begin{equation}
(\lambda_{i})_j=(-1)^j\sqrt{\frac{\mathcal{P}_i+\mathcal{R}_i\pm\sqrt{(\mathcal{P}_i-\mathcal{R}_i)^{2}+4q_i^{2}}}{2}}
\end{equation}
and
\begin{equation*}
s_{i,j}=(1,\frac{\mathcal{P}_i-\lambda_{j}^{2}}{q_i},\lambda_{j},\lambda_{j}\frac{\mathcal{P}_i-\lambda_{j}^{2}}{q_i})^{T}
\end{equation*}
respectively.
The matrix $S_{i}=(s_{i,1},s_{i,2},s_{i,3},s_{i,4})$, then diagonalizes $\mathcal{F}_{i}$. 
The diagonal matrix $S_{i}^{-1}\mathcal{F}_{i}S_{i}$ can be written in the form $(B_{i}+\mathcal{G}_{i})x^{a}$, where $B_{i}$ has 
diagonal elements $\mu_{i}$, $\mu_{i}$, $-\mu_{i}$ and $-\mu_{i}$, with $\mu_{i}>0$, and the Hilbert-Schmidt norm of 
$\mathcal{G}_{i}$ goes to zero when $x\rightarrow\infty$.

Now apply the transformation $\mathcal{Y}_{i}=S_{i}Q_{i}$ to the system (\ref{gtr}) to obtain the equation
\begin{equation}\label{uut}
Q'_{i}=(S_{i}^{-1}\mathcal{F}_{i}S_{i}-S_{i}^{-1}S'_{i})Q_{i}.
\end{equation} 
If we explicitly calculate the Hilbert-Schmidt norm of the matrix $S_{i}^{-1}S'_{i}$, we can see that it is, like $\mathcal{G}_i$, of order $o(x^{a})$.
Hence, if we apply the transformation $t=x^{a+1}/a+1$ to (\ref{uut}), we obtain
\[
\frac{dQ_{i}}{dt}=(B_{i}+\mathcal{E}_{i})Q_{i},
\]
where the Hilbert-Schmidt norm of $\mathcal{E}_{i}$ goes to zero when $t\rightarrow\infty$. We can now use the following theorem from \cite{Pe}

\begin{theorem}(Perron)
Consider the first order $n$-dimensional system 
\[
\frac{d\mathcal{Q}(t)}{dt}=(B+\mathcal{E}(t))\mathcal{Q}(t),
\]
where $\mathcal{Q}(t)$ is a column vector and $B$, $\mathcal{E}(t)$ are (possible complex valued) matrixes such that $B$ is independent of $t$, and the Hilbert-Schmidt norm of $\mathcal{E}(t)$ goes to zero when $t\rightarrow\infty$ (almost diagonal system). Then, the system has $n$ independent solutions $\mathcal{Q}_{i}$, $i\in\{1,...,n\}$ such that if $|\mathcal{Q}_{i}|$ is the length of the vector $\mathcal{Q}_{i}$, then 
\[
\lim_{t\rightarrow\infty}t^{-1}\log|\mathcal{Q}_{i}|=\rho_{i},
\] 
where $\rho_{i}=\Re\lambda_{i}$, and $\lambda_{i}$ are the $n$ eigenvalues of $B$.
\end{theorem}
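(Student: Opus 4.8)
The plan is to reduce $B$ to a convenient normal form, to establish crude two-sided bounds on the Lyapunov exponent valid for \emph{every} solution, and then to sharpen these into a full fundamental system realizing each value $\Re\lambda_i$ with the correct multiplicity. First I would conjugate by a constant invertible matrix $T$ chosen so that $T^{-1}BT$ is upper triangular with diagonal entries $\lambda_1,\dots,\lambda_n$ ordered by real part (a Schur or Jordan transformation). Setting $\mathcal{Q}=T\tilde{\mathcal{Q}}$ turns the system into $\tilde{\mathcal{Q}}'=(T^{-1}BT+T^{-1}\mathcal{E}T)\tilde{\mathcal{Q}}$; the conjugated perturbation $T^{-1}\mathcal{E}T$ still tends to zero, and since $|\tilde{\mathcal{Q}}|\asymp|\mathcal{Q}|$ with constants independent of $t$, the quantities $\lim_{t\to\infty}t^{-1}\log|\mathcal{Q}|$ are unchanged. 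So I may assume from now on that $B$ is upper triangular.

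Next I would prove that every nonzero solution satisfies $\min_i\Re\lambda_i\le\liminf_{t\to\infty}t^{-1}\log|\mathcal{Q}|$ and $\limsup_{t\to\infty}t^{-1}\log|\mathcal{Q}|\le\max_i\Re\lambda_i$. The naive energy estimate from $\tfrac{d}{dt}|\mathcal{Q}|^{2}=2\,\Re\langle(B+\mathcal{E})\mathcal{Q},\mathcal{Q}\rangle$ only bounds $|\mathcal{Q}|$ by the numerical abscissa of $B$, which for a non-normal $B$ exceeds $\max_i\Re\lambda_i$; this is the reason a bare energy argument fails. To repair it I would conjugate by the diagonal scaling $D_\delta=\mathrm{diag}(1,\delta,\dots,\delta^{\,n-1})$: the matrix $D_\delta^{-1}BD_\delta$ keeps the diagonal of $B$ but has off-diagonal entries of size $O(\delta)$, so the largest and smallest eigenvalues of its Hermitian part lie within $O(\delta)$ of $\max_i\Re\lambda_i$ and $\min_i\Re\lambda_i$ respectively. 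Because $D_\delta^{-1}\mathcal{E}(t)D_\delta\to0$ as well, for $t$ large a Gronwall estimate applied from above and from below to $\tfrac{d}{dt}|D_\delta^{-1}\mathcal{Q}|^{2}$ yields the two bounds up to $O(\delta)$; letting $\delta\to0$ completes this step and confines all exponents to $[\min_i\Re\lambda_i,\max_i\Re\lambda_i]$.

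The main work, and the main obstacle, is to produce $n$ independent solutions whose exponents are \emph{exactly} the $\Re\lambda_i$. Here I would group the eigenvalues by distinct real parts $\rho^{(1)}<\dots<\rho^{(m)}$, with the associated $B$-invariant spectral subspaces. For each gap value strictly between consecutive $\rho^{(k)}$, the unperturbed system $\mathcal{Q}'=B\mathcal{Q}$ has a (shifted) exponential dichotomy with splitting given by those spectral subspaces. By the roughness of exponential dichotomies, once $T$ is large enough that $\sup_{t\ge T}\|\mathcal{E}(t)\|$ is smaller than the gaps, the perturbed system $\mathcal{Q}'=(B+\mathcal{E})\mathcal{Q}$ inherits exponential dichotomies with the same splitting dimensions on $[T,\infty)$; this produces a filtration of the solution space on which a basis solution adapted to the $k$-th step grows precisely like $e^{\rho^{(k)}t}$, with multiplicity equal to the algebraic multiplicity of the eigenvalues of real part $\rho^{(k)}$. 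Combining this filtration with the crude two-sided bounds of the previous paragraph forces $e^{-\rho^{(k)}t}\mathcal{Q}_i(t)$ to be subexponential in both directions, so the limit $\lim_{t\to\infty}t^{-1}\log|\mathcal{Q}_i|$ exists and equals $\rho_i=\Re\lambda_i$, as claimed. The delicate point throughout is that $\mathcal{E}$ is only assumed to tend to zero and not to be integrable: this is exactly what allows the dichotomy/roughness step but prevents the finer Levinson-type asymptotics $\mathcal{Q}_i\sim e^{\lambda_i t}v_i$, so one obtains control of the Lyapunov exponents only, which is all the statement asserts.
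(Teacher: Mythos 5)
Your proposal is correct in all essentials, but there is nothing in the paper to compare it against: the paper does not prove this statement at all, it quotes it verbatim as a classical theorem with a citation to Perron's 1929 paper \cite{Pe} and then applies it to the system (\ref{e45}). So what you have produced is a reconstruction of the cited classical result, and your route is the modern dichotomy-theoretic one (essentially Coppel's treatment) rather than Perron's original argument, which proceeds by more computational means (time-dependent triangularizing transformations and integration of the resulting scalar diagonal equations). Your three steps fit together correctly: the Schur reduction is harmless since a constant conjugation does not change $\lim t^{-1}\log|\mathcal{Q}|$; the scaling $D_\delta=\mathrm{diag}(1,\delta,\dots,\delta^{n-1})$ is exactly the right fix for the failure of the naive energy estimate on a non-normal $B$, since by Gershgorin the Hermitian part of $D_\delta^{-1}BD_\delta$ has spectrum within $O(\delta)$ of $[\min_i\Re\lambda_i,\max_i\Re\lambda_i]$, and Gronwall then confines every exponent to that interval; and the roughness theorem for exponential dichotomies, applied to the shifted systems $y'=(B-cI+\mathcal{E}(t))y$ with $c$ in a spectral gap $(\rho^{(k)},\rho^{(k+1)})$ and $T$ chosen so large that $\sup_{t\ge T}\|\mathcal{E}\|$ is below the roughness threshold (which, to be precise, depends on the dichotomy constants of $B-cI$, not merely on the width of the gap), preserves the rank of the dichotomy projection and yields subspaces $V_{(k)}$ of solutions, characterized $T$-independently by $\limsup t^{-1}\log|\mathcal{Q}|<c$, with $\dim V_{(k)}=n_1+\cdots+n_k$. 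Sandwiching a solution in $V_{(k)}\setminus V_{(k-1)}$ between dichotomies at levels $\rho^{(k)}\pm\epsilon/2$ (with your crude bounds covering the extreme cases $k=1$ and $k=m$, where one side has no gap available) pins the limit to exactly $\rho^{(k)}$ with the correct multiplicities, which is the full statement. What your approach buys is a clean multiplicity count and a transparent filtration structure, at the price of invoking the roughness theorem as a black box; and your closing remark is exactly right that since $\mathcal{E}$ is only $o(1)$ and not integrable, Levinson-type asymptotics $\mathcal{Q}_i\sim e^{\lambda_i t}v_i$ are unavailable and only the Lyapunov exponents can be controlled, which is all the theorem (and the paper's application of it) requires.
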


From the above theorem, we obtain a system of solutions  $(w_{i}^{+,1}(x),v_i^{+,1}(x))^T$, $(w_{i}^{+,2}(x),v_i^{+,2}(x))^T$,
$(w_{i}^{-,1}(x),v_i^{-,1}(x))^T$, and $(w_{i}^{-,2}(x),v_i^{-,2}(x))^T$
of (\ref{e45}) satisfying
\begin{gather*}
\sqrt{|w_{i}^{+,k}(x)|^{2}+|v_i^{+,k}(x)|^{2}+|(w_{i}^{+,k})'(x)|^{2}+|(v_i^{+,k})'(x)|^{2}}  \geq c_{\delta} \mathrm{e}^{(\frac{\mu_{i}}{a+1}-\epsilon) x^{a+1}},\\
\sqrt{|w_{i}^{-,k}(x)|^{2}+|v_i^{-,k}(x)|^{2}+|(w_{i}^{-,k})'(x)|^{2}+|(v_i^{-,k})'(x)|^{2}}  \leq c_{\delta} \mathrm{e}^{(-\frac{\mu_{i}}{a+1}+\epsilon) x^{a+1}},
\end{gather*}
for $k=1,2$ and any $\epsilon >0$ with some constant $c_\delta$ depending on $\delta$.
The system (\ref{e45}) can also be written as
$$
 L \left(\begin{matrix} w_i \\ v_i \end{matrix} \right) - \mathrm{e}^z \left(\begin{matrix} w_i \\ v_i \end{matrix} \right) =0,
$$
where
$$
 L = \left( \begin{matrix} -\frac{d^2}{dx^2} + x^{2a}\mu_{i}^{2}+\frac{\gamma_{p}(\gamma_{p}+2)}{4x^{2}}
& q_i(x)\\ q_i(x) &  -\frac{d^2}{dx^2} + x^{2a}\mu_{i}^{2}+\frac{\gamma_{p-1}(\gamma_{p-1}-2)}{4x^{2}}\end{matrix} \right) .
$$
The differential operator $L$ can be made self-adjoint by imposing Dirichlet boundary conditions at $x=1$. It then becomes
an operator with compact resolvent. Thus, if $f \in C^\infty_0([1,\infty)) \otimes \mathbb{C}^2$ 
is compactly supported, $(L-\mathrm{e}^z)^{-1} f$ will, for large $x$, be a solution to (\ref{e45}) that is square integrable and 
depends meromorphically on $z$. 
This solution can be chosen to construct a fundamental system satisfying the above estimates that depends meromorphically on $z$.
By meromorphicity of $E_z(\theta, \tilde{\theta})$, $w_{i}$ and $v_i$ depend meromorphically on $z$. 
When $z$ is in the physical sheet,  $\Psi_{z,d,\delta}(y,\theta,\tilde{\theta})\in L_{\delta,d}^{2}([1,\infty) \times N,\wedge^{p}T^{\ast}[1,\infty) \times N)$, which implies $w_{i}$ and $v_i$ are both in  $L^{2}([1,\infty),dx)$. 
If we then expand $(v_i,w_i)$ in terms of the fundamental system of solutions, the coefficients depend mermorphically on $z$. The coefficients in front
of the exponentially increasing terms vanish for $z$ in the physical sheet. They therefore have to vanish everywhere.
Thus $v_i(x)$, $w_{i}(x)$ must be $O(\mathrm{e}^{(-\frac{\mu_i}{a+1}+\epsilon) x^{a+1}})$ for any $\epsilon >0$, which gives $\alpha_{i}(x)=O(x^{\gamma_{p}/2}\mathrm{e}^{(-\frac{\mu_{i}}{a+1}+\epsilon) x^{a+1}})$ and $\beta_{i}(x)=O(x^{\gamma_{p-1}/2}\mathrm{e}^{(-\frac{\mu_{i}}{a+1}+\epsilon) x^{a+1}})$ for all $z$. Using a similar argument for the term $\Psi_{z,\delta,d}(y,\theta,\tilde{\theta})$, we get the following bound on the tail term on the cusp
\[
\Psi_z((x,\zeta),\theta,\tilde{\theta})=O(x^{\gamma_{p-1}/2}\mathrm{e}^{(-\frac{\mu_{0}}{a+1} +\epsilon) x^{a+1}}), \,\,\, \mbox{when} \,\,\, z \in\mathbb{C}.
\]

It is worth noting here that $C_{p,z}$ is not the scattering matrix obtained by comparing $\Delta$ with $\Delta_0$
so the notation ``scattering matrix" for $C_{p,z}$ is a mild abuse of terminology.
The dynamical scattering matrix $$S_{p,z}\in\mathrm{End}\Big(\mathcal{H}^{p}(N)\oplus\mathcal{H}^{p-1}(N)\Big)$$ can be obtained from 
$C_{p,z}$ by
\[
S_{p,z}=\left(\begin{array}{cccc}  -\frac{\mathrm{H}_{b_{p}}^{(1)}(\mathrm{e}^{z/2})}{\mathrm{H}_{b_{p}}^{(2)}(\mathrm{e}^{z/2})} C_{p,z} & 0 & \\  
0 &  -\frac{\mathrm{H}_{b_{p-1}-1}^{(1)}(\mathrm{e}^{z/2})}{\mathrm{H}_{b_{p-1}-1}^{(2)}(\mathrm{e}^{z/2})} C_{p-1,z} & \end{array}\right).
\]
The paper \cite{Roi} contains more details about the relationship of stationary to dynamical scattering theory. In particular existence and completeness
of the wave operators in dynamical scattering theory follow from the fact that the difference $(\Delta_c+1)^{-n} - (\Delta_M+1)^{-n}$
is a trace class operator. Using an analysis completely analogous to the one by Guillope (\cite{Gui89}), one can also obtain the generalized eigensections of Theorem \ref{th1}
by applying the wave operators to the generalized eigenfunctions that span the continuous spectrum of $\Delta_c$. 
As mentioned in the introduction, this implies that the continuous spectral subspace is the closure of the span of
$$
 \{ \int_{-\infty}^\infty E_z(\cdot,\theta , \tilde \theta) f(z) d z \mid f \in C^\infty_0(\mathbb{R}), \theta , \tilde \theta \in\mathcal{H}^{p}(N)\oplus\mathcal{H}^{p-1}(N) \}.
$$
In order to be self-contained we give here a short argument that proves this without relying on time-dependent scattering theory.
Let $\psi \in C^\infty_0(M,\wedge^{p}T^{\ast}M)$ be a compactly supported smooth section. Since the resolvent has a meromorphic continuation,
Stone's formula implies that the spectral measure $\langle \psi, d\mathrm{E}_\lambda \psi \rangle$ is given by
\[
 \langle \psi, d\mathrm{E}_\lambda \psi \rangle =  \langle \psi, F_\lambda(\psi) \rangle d\lambda, 
 \]
 where
 \[
 F_\lambda(\psi)=\frac{1}{2 \pi \mathrm{i}} (F_\lambda^+(\psi) - F_\lambda^-(\psi)),
 \]
 is given by
 \[
 F_\lambda^\pm(\psi):=\lim_{\epsilon \to 0}\frac{1}{2 \pi \mathrm{i}}\left( (\Delta_{M,p} - \lambda \mp \mathrm{i} \epsilon )^{-1}  \right) \psi.
\]
Since this is a distribution of order $0$, the only contribution of poles to the spectral measure are Dirac measures. 
These Dirac measures will give rise to eigenvalues of finite multiplicities that cannot accumulate in $(0,\infty)$ because poles do not accumulate there. 
The continuous spectral subspace is therefore the closure of the span of elements of the form
$$
 \int_{-\infty}^\infty F_\lambda(\psi) f(\lambda) d \lambda,
$$
where $f \in C^\infty_0(R^+)$ is supported away from the poles of the resolvents and $\psi \in C^\infty_0(M,\wedge^{p}T^{\ast}M)$.
The statement about the continuous spectrum follows if we are able to show that away from poles,
each $F_{\mathrm{e}^z}(\psi)$ equals some generalized eigensection $E_{z}(\theta(\psi) , \tilde \theta(\psi))$.

To show this, let $z_0$ be a point on the real axis that is not a pole.
Then each $F_{\mathrm{e}^z}(\psi)$ is holomorphic near $z_0$ and satisfies the equation $(\Delta-\mathrm{e}^z) F_{\mathrm{e}^z}(\psi)=0$ there.
By uniqueness of generalized eigensections it is now enough to show that
$$
 F_{\mathrm{e}^z}^+(\psi) = \Big(x^{b_{p}}H_{b_{p}}^{(2)}(\mathrm{e}^{z/2} x)\theta+dx\wedge x^{b_{p-1}}H_{b_{p-1}-1}^{(2)}(\mathrm{e}^{z/2} x)\tilde{\theta}\Big) + 
\Phi_{\mathrm{e}^z}^+(\psi),
$$
where $\Phi_{\mathrm{e}^z}^+(\psi)$ is square integrable for $z$ near $z_0$.

Let $\chi \in C^\infty(M)$ be a function with support in $[1,\infty)\times N$ depending only on $x$ such that
the support of $\chi$ has positive distance from the support of $\psi$ and
such that $1-\chi \in C^\infty_0(M)$.
Since $z_0\in \mathbb{R}$, there are values of $z$ near $z_0$ for which $\mathrm{e}^z$ is in the physical sheet (i.e., $z$ is in the resolvent set).
For these $z$, the section $\chi ( F_{\mathrm{e}^z}^+(\psi) )$ is in the domain of both
$\Delta_M$ and $\Delta_c$ and therefore
\begin{gather*}
 \chi ( F_{\mathrm{e}^z}^+(\psi) ) = (\Delta_c - \mathrm{e}^z)^{-1} ( \Delta_M - \mathrm{e}^z) ( \chi ( F_{\mathrm{e}^z}^+(\psi) )).
\end{gather*}
However, this relation is holomorphic and is therefore also valid at $z_0$.

By the properties of the resolvent on the cusp shown earlier, this proves that the restriction of
$F_{\mathrm{e}^z}^+(\psi)$ to the cusp can be written as a sum of two terms, one of the form
$$
  \Big(x^{b_{p}}H_{b_{p}}^{(2)}(\mathrm{e}^{z/2} x)\theta+dx\wedge x^{b_{p-1}}H_{b_{p-1}-1}^{(2)}(\mathrm{e}^{z/2} x)\tilde{\theta}\Big),
$$
and another one meromorphic with values in $L^2$. Since $F_{\mathrm{e}^z}^+(\psi)$ is holomorphic this function is necessarily regular near $z_0$.
Thus, indeed $F_{\mathrm{e}^z}(\psi) = E_{z}(\theta(\psi) , \tilde \theta(\psi))$, where $(\theta(\psi) , \tilde \theta(\psi))$ 
can be read off from the asymptotic behavior of $F_{\mathrm{e}^z}^+(\psi)$ on the cusp as $x \to \infty$.

\section{Additional properties of the stationary scattering matrix}
The stationary scattering matrix $C_{p,z}$ of Theorem \ref{th1}
has properties similar to the scattering matrices in the cusp case, but with some
interesting differences. 
In this section, we use the rest of Theorem \ref{th1} in order to explore these properties. 
In particular, we prove that $C_{p,z}$ is a unitary endomorphism and find its functional equation. The main idea for the proof of the unitarity is the well known behavior of $E_z(y,\theta,\tilde{\theta})$ at infinity along the cusp of $M$.  For the functional equation, the proof is based on the uniqueness from Theorem \ref{th1}. Finally, we can use this uniqueness to find the commutation relation between $C_{p,z}$ and the Hodge star operator. The results of this section are recorded in the introduction
as Theorem \ref{th2}.

We begin with the unitarity claim in Theorem \ref{th2}.
Consider the manifold $M_{t}=M_{0}\cup ([1,t)\times N)$, for some $t>1$, together with the inner product $(\cdot,\cdot)_{M_{t}}$ induced by the metric $g$ when it is restricted to $M_{t}$, namely $(v,w)_{M_{t}}=\int_{M_{t}}\bar{v}\wedge\ast w$, for any $v,w\in\Omega^{p}(M_{t})$. Since $E_z(y,\theta,0)$ is in the kernel of $\Delta_M-\mathrm{e}^{z}I$, we have the following equality
\[
\Big((\Delta_M-\mathrm{e}^{\bar{z}}I)E_{\bar{z}}(y,\theta,0),E_z(y,\theta,0)\Big)_{M_{t}}=
\Big(E_{\bar{z}}(y,\theta,0),(\Delta_M-\mathrm{e}^{z}I)E_z(y,\theta,0)\Big)_{M_{t}},
\]
which implies
\[
\Big(\Delta_M E_{\bar{z}}(y,\theta,0),E_z(y,\theta,0)\Big)_{M_{t}}-\Big(E_{\bar{z}}(y,\theta,0),\Delta_M E_z(y,\theta,0)\Big)_{M_{t}}=0.
\]
For any $v,w\in\Omega^{p}(M_{t})$ we have the following Green's formula (see \cite{Duf})
\[
(\Delta_M u,w)_{M_{t}}-(u,\Delta_M w)_{M_{t}}=\int_{\partial M_{t}}\bar{u}\wedge\ast dw-{w}\wedge\ast d\bar{u}+\delta \bar{u}\wedge\ast w-\delta {w}\wedge\ast \bar{u}.
\]
If we apply this to the previous equation we get
\begin{gather}\nonumber
\int_{N,x=t}
\left( 
\bar{E}_{\bar{z}}((x,\zeta),\theta,0)\wedge\ast dE_z((x,\zeta),\theta,0)-E_z((x,\zeta),\theta,0)\wedge\ast d\bar{E}_{\bar{z}}((x,\zeta),\theta,0) 
\right.
\\\label{e49}
\left.
+\delta\bar{E}_{\bar{z}}((x,\zeta),\theta,0)\wedge\ast E_z((x,\zeta),\theta,0)-\delta E_z((x,\zeta),\theta,0)
\wedge\ast\bar{E}_{\bar{z}}((x,\zeta),\theta,0)
\right)
=0.
\end{gather}
Let us use the notation
\[
f_{1,z}(x)=x^{b_{p}}\mathrm{H}_{b_{p}}^{(1)}(\mathrm{e}^{z/2} x), \,\,\, f_{2,z}(x)=x^{b_{p}}\mathrm{H}_{b_{p}}^{(2)}(\mathrm{e}^{z/2} x).
\]
Then, 
\[
E_z((x,\zeta),\theta,0)=f_{2,z}\theta+f_{1,z}C_{p,z}(\theta)+\Psi_z((x,\zeta),\theta,0),
\]   
for $x>1$. By the expression of $g$ on the cusp and Theorem \ref{th1}, we have 
\[
dE_z((x,\zeta),\theta,0) \sim dx\wedge\Big(\partial_{x}f_{2,z}\theta+\partial_{x}f_{1,z}C_{p,z}(\theta)\Big),
\]
\[
\ast dE_z((x,\zeta),\theta,0)\sim x^{-\gamma_{p}}\ast_{N}\Big(\partial_{x}f_{2,z}\theta+\partial_{x}f_{1,z}C_{p,z}(\theta)\Big),
\]
\begin{equation}
\label{star}
\ast E_z((x,\zeta),\theta,\tilde{\theta})\sim 
(-1)^{p}x^{-\gamma_{p}}dx\wedge\ast_{N}\Big(f_{2,z}\theta+f_{1,z}C_{p,z}(\theta)\Big)
\end{equation}
and
\begin{equation*}
\delta {E}_z(\theta,0) \sim 0,
\end{equation*}
where $\gamma_{p}$ and $b_{p}$ are defined in Section 2. Hence, Equation (\ref{e49}) becomes
\begin{gather}\nonumber
\lim_{t \rightarrow 0} \int_{N,x=t}\Big(\bar{f}_{2,\bar{z}}\bar{\theta}+\bar{f}_{1,\bar{z}}\bar{C}_{p,\bar{z}}(\theta)\Big)\wedge x^{-\gamma_{p}}\ast_{N}\Big(\partial_{x}f_{2,z}\theta+\partial_{x}f_{1,z}C_{p,z}(\theta)\Big)
\\ \label{equation}
-\Big({f}_{2,z}{\theta}+{f}_{1,z}{C}_{p,z}(\theta)\Big)\wedge x^{-\gamma_{p}}\ast_{N}\Big(\partial_{x}\bar{f}_{2,\bar{z}}\bar{\theta}+\partial_{x}\bar{f}_{1,\bar{z}}\bar{C}_{p,\bar{z}}(\theta)\Big)
=0.
\end{gather}
Since the scattering matrix is holomorphic and unitarity is a holomorphic  condition, it is enough to prove
unitarity in a nonempty open set.  We therefore restrict the proof to the physical sheet, where
Equations (\ref{eq:H1inf}) and (\ref{eq:H2inf}) of the Appendix hold.  We obtain the following asymptotic behaviors 
\begin{gather*}
f_{1,z}(t) \sim \sqrt{\frac{2}{\pi}} t^{\gamma_p/2}\mathrm{e}^{\mathrm{i}\mathrm{e}^{z/2} t}{\mathrm{e}^{-z/4}}(1+O(\mathrm{e}^{-z/2}t^{-1})), \\
f_{2,z}(t) \sim \sqrt{\frac{2}{\pi}} t^{\gamma_p/2}\mathrm{e}^{-\mathrm{i}\mathrm{e}^{z/2} t}{\mathrm{e}^{-z/4}}(1+O(\mathrm{e}^{-z/2}t^{-1})),
\\
\partial_{t}f_{1,z}(t)\sim \mathrm{i} \sqrt{\frac{2}{\pi}}\mathrm{e}^{z/4} t^{\gamma_p/2}\mathrm{e}^{\mathrm{i}\mathrm{e}^{z/2}}(1+O(\mathrm{e}^{-z/2}t^{-1}))
\end{gather*}
and
\[
\partial_{t}f_{2,z}(t)\sim  -\mathrm{i} \sqrt{\frac{2}{\pi}} \mathrm{e}^{z/4}  t^{\gamma_p/2}\mathrm{e}^{-\mathrm{i}\mathrm{e}^{z/2}}(1+O(\mathrm{e}^{-z/2}t^{-1})).
\]
If we take the limit of Equation (\ref{equation}) as 
$t$ goes to infinity and use the previous asymptotic expansions, we find that all the terms in Equation (\ref{equation}) 
that contain exactly one occurence of a scattering matrix cancel out.  The non mixed terms remain and 
are constant in $t$, and thus give the following relation
\begin{gather*}
+\frac{4 \mathrm{i}}{\pi}\Big( (\theta,\theta)_{N}-(C_{p,\bar{z}}(\theta),C_{p,z}(\theta))_{N}\Big)=0,
\end{gather*}
which proves the unitarity of $C_{p,z}$.

We may note in addition the following.  Since $\bar{\mathrm{H}}_{b}^{(1)}(\lambda x)=\mathrm{H}_{b}^{(2)}(\bar{\lambda}x)$ 
and $\bar{\mathrm{H}}_{b}^{(2)}(\lambda x)=\mathrm{H}_{b}^{(1)}(\bar{\lambda}x)$, by Theorem \ref{th1}, on the cusp we get
\begin{gather*}
\overline{E_z((x,\zeta),\theta,0)}=
x^{b_{p}}\mathrm{H}^{(1)}_{b_{p}}(\mathrm{e}^{\bar{z}/2} x)\bar{\theta}
+x^{b_{p}}\mathrm{H}^{(2)}_{b_{p}}(\mathrm{e}^{\bar{z}/2} x)\overline{C_{p,z}(\theta)}+\overline{\Psi_z((x,\zeta),\theta,0)},
\end{gather*}
and
\begin{gather*}
E_{\bar{z}}\Big((x,\zeta),\overline{C_{p,z}(\theta)},0\Big)=
x^{b_{p}}\mathrm{H}^{(2)}_{b_{p}}(\mathrm{e}^{\bar{z}/2} x)\overline{C_{p,z}(\theta)}
+x^{b_{p}}\mathrm{H}^{(1)}_{b_{p}}(\mathrm{e}^{\bar{z}/2} x)C_{p,\bar{z}}\circ\overline{C_{p,z}(\theta)}\\
+\Psi_{\mathrm{e}^{\bar{z}}}\Big((x,\zeta),\overline{C_{p,z}(\theta)},0\Big).
\end{gather*}
Comparing the above equations, by using uniqueness from Theorem \ref{th1}, we get that $C_{p,\bar{z}}\circ\overline{C_{p,z}(\theta)}=\bar{\theta}$.

Next we use the uniqueness of $E_z(y,\theta,\tilde{\theta})$ in Theorem \ref{th1} to derive the functional equation for $C_{p,z}$ that forms the second part of Theorem \ref{th2}.
By Theorem \ref{th1}, we find on the cusp
\begin{gather*}
 E_{z-2\pi \mathrm{i}}((x,\zeta),\theta,0)=x^{b_{p}}\mathrm{H}^{(2)}_{b_{p}}(\mathrm{e}^{z/2-\pi \mathrm{i}} x)\theta+
x^{b_{p}}\mathrm{H}^{(1)}_{b_{p}}(\mathrm{e}^{z/2-\pi \mathrm{i}} x)C_{p,z-2\pi \mathrm{i}}(\theta) \\ +\Psi_{z-2\pi \mathrm{i}}((x,\zeta),\theta,0).
\end{gather*}
Using Equation (\ref{H777}) of the Appendix this gives on the cusp
\begin{gather*}
 E_{z-2\pi \mathrm{i}}((x,\zeta),\theta,0)=\\ -\mathrm{e}^{\mathrm{i} \pi b_p} x^{b_{p}}\mathrm{H}^{(1)}_{b_{p}}(\mathrm{e}^{z/2} x)\theta
+x^{b_{p}}\left(2 \cos{(\pi b_p)}\mathrm{H}^{(1)}_{b_{p}}(\mathrm{e}^{z/2} x)+ \mathrm{e}^{-\mathrm{i} \pi b_p}  \mathrm{H}^{(2)}_{b_{p}}(\mathrm{e}^{z/2} x)\right)C_{p,z-2\pi \mathrm{i}}(\theta)\\
+\Psi_{z-2\pi \mathrm{i}}((x,\zeta),\theta,0).
\end{gather*}
Both $E_{z-2\pi \mathrm{i}}(y,\theta_1,0)$ and $E_z(y,\theta_2,0)$ are 
$\mathrm{e}^z$ eigenforms for any $p$-forms $\theta_1$ and $\theta_2$. Setting
$\theta_1=\theta$ and $\theta_2=\mathrm{e}^{-\mathrm{i} \pi b_p}C_{p,z-2\pi \mathrm{i}}(\theta)$,
the terms in the expansion containing $\mathrm{H}^{(2)}_{b_{p}}(\mathrm{e}^{z/2} x)$ coincide. By uniqueness of the expansion,
we get
$$
 E_{z-2\pi \mathrm{i}}(y,\theta,0)=E_{z}(y,\mathrm{e}^{-\mathrm{i} \pi b_p} C_{p,z-2\pi \mathrm{i}}(\theta),0).
$$
Comparing coefficients in the two expansions gives
$$
 \mathrm{e}^{-\mathrm{i} \pi b_p} C_{p,z} \circ C_{p,z-2\pi \mathrm{i}} =2 \cos{(\pi b_p)} C_{p,z-2\pi \mathrm{i}} -\mathrm{e}^{\mathrm{i} \pi b_p},
$$
which can be further simplified to the functional equation
$$
 \left((1 + \mathrm{e}^{2 \pi \mathrm{i} b_p})\mathrm{id} - C_{p,z}\right) \circ C_{p,z-2\pi \mathrm{i}} = \mathrm{e}^{2 \pi \mathrm{i} b_p} \mathrm{id}.
$$

Finally, we again use the uniqueness from Theorem \ref{th1} to prove a commutation relation between the scattering matrix and the Hodge star operator on $N$ that is the third part of Theorem \ref{th2}.
Apply Theorem \ref{th1} to the case of $n-p$ forms, i.e. $(\theta,\tilde{\theta})\in\mathcal{H}^{n-p}(N)\oplus\mathcal{H}^{n-p-1}(N)$. Since the Hodge star operator commutes with the Laplacian, $\ast E_z(y,\theta,\tilde{\theta})$ is an $\mathrm{e}^{z}$-eigenform of $\Delta$ acting on $p$-forms. 
We assume here that $p\leq \frac{n-1}{2}$ so that $b_p>0$.
By Equation \ref{star}, if $\theta$ is a $p$-form on $N$ then 
\begin{gather*}
*_M E_z((x,\zeta), \theta,0) = (-1)^p x^{-\gamma_b + b_p} dx \wedge 
\left[ \mathrm{H}^{(2)}_{b_p}(\mathrm{e}^{z/2} x) *_N \theta + \mathrm{H}^{(1)}_{b_p}(\mathrm{e}^{z/2}x) *_N C_{p,z}(\theta) \right] \\
+ *_M \Psi_{b_p}((x,\zeta),\theta,0).
\end{gather*}

The form $\ast_N \theta$ is an $n-p-1$-form, so by Theorem \ref{th1}, 
\begin{gather*}
E_z((x,\zeta),0, (-1)^p \mathrm{e}^{b_p \pi \mathrm{i}}*_N \theta) = \\
dx \wedge x^{b_{n-p-1}} \left[ \mathrm{H}^{(2)}_{b_{n-p-1}-1}(\mathrm{e}^{z/2}x) (-1)^p\mathrm{e}^{b_p\pi \mathrm{i}} *_N \theta
+ \mathrm{H}^{(1)}_{b_{n-p-1} -1}(\mathrm{e}^{z/2}x) C_{n-p-1,z}((-1)^p\mathrm{e}^{b_p \pi \mathrm{i}} *_N\theta) \right] \\
+ \Psi_{b_{n-p-1}}((x,\zeta),0, (-1)^p \mathrm{e}^{b_p \pi \mathrm{i}}*_N \theta)\\
=(-1)^px^{-\gamma_p + b_p} dx \wedge \left[\mathrm{H}^{(2)}_{b_p}(\mathrm{e}^{z/2}x)*_N\theta
+ \mathrm{e}^{2b_p \pi \mathrm{i}}\mathrm{H}^{(1)}_{b_p}(\mathrm{e}^{z/2}x) C_{n-p-1,z}(*_N\theta) \right]
\\
+ \Psi_{b_{n-p-1}}((x,\zeta),0, (-1)^p \mathrm{e}^{b_p \pi \mathrm{i}}*_N \theta).
\end{gather*}
We have used that $b_{p}-\gamma_{p}=1-b_{p}$ and $1-b_{n-p-1}=b_{p}$.
Comparing these expansions, we get the relation
\[
*_N C_{p,z}(\theta) = \mathrm{e}^{2b_p \pi \mathrm{i}} C_{n-p-1}(*_N \theta).
\]

\section{Outlook}
Both the resolvent as well as the scattering matrix for zero energy were shown in this paper to have
meromorphic continuations to a logarithmic cover of the complex plane. The type of singularity appearing near zero
inspired the second author and J. M\"uller in \cite{MS}
to develop a framework to deal with resolvents near such logarithmic branching points. This framework of Hahn-meromorphic
functions fits very well with the results of this paper and is expected to also apply to manifolds that look like cones at infinity or even
to asymptotically Euclidian manifolds.

In recent work, the first author and D. Grieser have developed a pseudodifferential 
operator framework that permits the construction of parametrices for the Laplacian
on the manifolds considered in this paper in the special case when $a\in \mathbb{N}$.
Whereas the functional equation clearly simplifies in the case when $a$ is an integer
the general formulas show even an analytic dependence on $a$.
It is thus natural to ask if a more general pseudodifferential operator calculus
would be suitable for tackling spectral problems in settings such as the generalized cusp and which would reflect the analytic properties of the Hankel functions near zero.
Our analysis would be an important test case for such a calculus.

\section{Appendix}

\subsection{The Weber transform}

The Weber transform decomposes functions in an appropriate domain of functions on the half-line $[1,\infty)$ in terms of cylinder functions for $\lambda \in [0,\infty)$:  
\[
G_{b}(\lambda,x)=Y_{b}(\lambda)J_{b}(\lambda x)-J_{b}(\lambda)Y_{b}(\lambda x),
\]
where as usual, $J_b$ and $Y_b$ are the Bessel functions of order $b$ of the first and second
kinds, respectively.  The cylinder function $G_{b}(\lambda,x)$
is a generalized $\lambda^2$ eigenfunction for the operator 
\[
B_b:= -\partial_x^2 - \frac{\partial_x}{x} +\frac{b^2}{x^2}
\]
on the interval $[1,\infty)$ with Dirichlet boundary conditions at $x=1$.
Formally, therefore, if a function $f$ on $[1,\infty)$ satisfying Dirichlet conditions at $x=1$
is written in terms of $B_b$ generalized eigenfunctions as
\[
f(x) =  \int_0^\infty g(\lambda)G_{b}(\lambda,x)\, d\mu_b(\lambda)
\]
for some spectral measure $d\mu_b(\lambda)$, then when we apply $B_b$ to both sides we
will get
\begin{equation}
\label{eq:webB}
(B_b f)(x) = \int_0^\infty \lambda^2 g(\lambda)G_{b}(\lambda,x)\, d\mu_b(\lambda).
\end{equation}
That is, the transformation of $f$ into its spectral coefficient function $g$ should
turn the operator $B_b$ into the multiplication operator by $\lambda^2$.
This can be made rigorous, and gives us the Weber transform.
In this Appendix, we will briefly recall the definition and properties of 
the Weber transform and its inverse.  The material in this Appendix is treated in greater detail in 
\cite{Wa} and \cite{Roi}.

We first define the Weber transform for smooth compactly supported functions in the ``space" variable $x \in [1,\infty)$ using the geometrically given measure
$x \, dx$:
\begin{definition}
(Weber transform) Let $f\in C_{0}^{\infty}(1,\infty)$. For any real $b$, define the transform $\mathbb{W}_{b}(f)$ of $f$ to be the function 
\[
\mathbb{W}_{b}(f)(\lambda)=\int_{1}^{\infty}f(x)G_{b}(\lambda,x)\, x \, dx.
\]
\end{definition}
\noindent
We want to determine the correct measure for the inverse transform.  For this we use
the Weber Integral Formula (see section 14.52 in \cite{Wa}):
\begin{theorem}(Weber)  Let $h \in C^\infty_0(0,\infty)$.  Then 
\begin{equation}
\label{eq:web}
h(u) = \frac{1}{J^2_b(u) + Y^2_b(u)}\int_1^\infty
\left(\int_0^\infty h(\lambda)G_{b}(\lambda,x)\,\lambda\,d\lambda \right) G_{b}(u,x)\, x\,dx.
\end{equation}
\end{theorem}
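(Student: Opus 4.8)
The plan is to recognize this identity as the spectral resolution (completeness relation) for the singular Sturm--Liouville operator $B_b=-\partial_x^2-\frac{1}{x}\partial_x+\frac{b^2}{x^2}$ acting on $L^2([1,\infty),x\,dx)$ with a Dirichlet condition at $x=1$, and to extract it from Stone's formula applied to the explicit resolvent kernel. First I would note that near $x=\infty$ the operator is a relatively bounded perturbation of $-\partial_x^2$, so it lies in the limit-point case there; hence the self-adjoint realization is fixed by the boundary condition at $x=1$ alone, and the family $\{G_b(\lambda,\cdot)\}$ is the natural set of generalized eigenfunctions since $G_b(\lambda,1)=0$.

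Next I would write down $(B_b-u)^{-1}$ for $u\in\mathbb{C}\setminus[0,\infty)$ as an integral operator whose kernel (with respect to $x\,dx$) is built from the fundamental system $\{G_b(\sqrt{u},x),\,H_b^{(1)}(\sqrt{u}\,x)\}$: the first solution satisfies the boundary condition at $x=1$, while $H_b^{(1)}(\sqrt{u}\,x)$ is the unique solution, up to scale, that is square integrable at infinity on the physical sheet. The Green's function then takes the usual Sturm--Liouville form $G_b(\sqrt{u},x_{<})\,H_b^{(1)}(\sqrt{u}\,x_{>})$ divided by the Wronskian, and because $G_b(\sqrt{u},1)=0$ the Wronskian is proportional to $H_b^{(1)}(\sqrt{u})$. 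Since $H_b^{(1)}(w)$ is zero-free for $\Im w>0$ (a classical fact for real order), where $\sqrt{u}$ lies when $u\notin[0,\infty)$, this kernel is holomorphic off the cut, which already confines the spectrum to $[0,\infty)$ with no discrete part and no point mass at $0$.

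The heart of the argument is Stone's formula: the spectral measure is the boundary jump $\frac{1}{2\pi\mathrm{i}}\big((B_b-u-\mathrm{i}0)^{-1}-(B_b-u+\mathrm{i}0)^{-1}\big)$ across the positive axis. I would compute this discontinuity at $u=\lambda^2$ using the connection formula $H_b^{(1)}=J_b+\mathrm{i}Y_b$ and the behaviour of the cylinder functions as $\sqrt{u}$ crosses the cut; the off-diagonal Hankel factors recombine so that the jump of the kernel equals $\frac{G_b(\lambda,x)G_b(\lambda,t)}{J_b^2(\lambda)+Y_b^2(\lambda)}$ against $\lambda\,d\lambda$, which identifies the spectral density as $d\mu_b(\lambda)=\frac{\lambda\,d\lambda}{J_b^2(\lambda)+Y_b^2(\lambda)}$. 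Feeding this density into the resolution of the identity yields $\int_1^\infty\big(\int_0^\infty g(\lambda)G_b(\lambda,x)\,d\mu_b(\lambda)\big)G_b(u,x)\,x\,dx=g(u)$, and the stated formula follows on writing $g(\lambda)=h(\lambda)\big(J_b^2(\lambda)+Y_b^2(\lambda)\big)$.

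I expect the main obstacle to be the bookkeeping in the jump computation: one must track how $J_b$, $Y_b$ and $H_b^{(1)}$ transform as $\sqrt{u}$ is continued around $0$, verify that the contributions at the endpoint $x=1$ cancel so that no boundary term or spurious eigenvalue survives, and confirm that the $x_{<}/x_{>}$ splitting contributes symmetrically. A more classical route would follow Watson's contour-integral proof in section 14.52 of \cite{Wa}, deforming the $\lambda$-contour and using the large-argument asymptotics of the cylinder functions to collapse the double integral; the analytic subtleties there, namely justifying the interchange of the $\lambda$- and $x$-integrals and the limit $\epsilon\to0$, are routine for $h\in C_0^\infty(0,\infty)$ but are precisely where the care is needed.
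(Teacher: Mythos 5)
Your route is genuinely different from the paper's: the paper does not prove this statement at all, but cites Watson's classical contour-integral argument (section 14.52 of \cite{Wa}). What you propose is instead the Weyl--Titchmarsh--Kodaira spectral approach, and its core computations are sound: $G_b(\lambda,\cdot)$ is indeed the solution fixed by the Dirichlet condition at $x=1$, the Wronskian of $\{G_b(\sqrt u,\cdot),H_b^{(1)}(\sqrt u\,\cdot)\}$ is $\tfrac{2}{\pi x}H_b^{(1)}(\sqrt u)$ (the same normalization the paper records for its resolvent kernel in Section 2.2), and the boundary-value jump works out exactly as you say, since $H_b^{(1)}(\lambda x)H_b^{(2)}(\lambda)-H_b^{(2)}(\lambda x)H_b^{(1)}(\lambda)=-2\mathrm{i}\,G_b(\lambda,x)$ and $H_b^{(1)}(\lambda)H_b^{(2)}(\lambda)=J_b^2(\lambda)+Y_b^2(\lambda)$ for real $\lambda$ and real order, yielding the density $d\mu_b(\lambda)=\frac{\lambda\,d\lambda}{J_b^2(\lambda)+Y_b^2(\lambda)}$. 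This buys a conceptual derivation of the spectral measure rather than Watson's ad hoc contour analysis, and it is the natural companion to how the paper actually uses these facts.

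There is, however, one genuine gap in the last step. Stone's formula (the resolution of the identity) gives you the expansion theorem in the $x$-variable: for $f$ on $[1,\infty)$, $f=\int_0^\infty \bigl(\mathbb{W}_b f\bigr)(\lambda)\,G_b(\lambda,\cdot)\,d\mu_b(\lambda)$, i.e. $\mathbb{W}_b^{-1}\mathbb{W}_b=I$ --- which is precisely the \emph{other} classical result, Theorem \ref{th:ti} (Titchmarsh), that the paper states separately. Weber's formula is the opposite composition, $\mathbb{W}_b\mathbb{W}_b^{-1}=I$ (cf.\ Equation (\ref{eq:wwinv})): one starts from data $h$ on the spectral side, synthesizes, analyzes, and must recover $h$. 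From $\mathbb{W}_b^{-1}\mathbb{W}_b=I$ and the Parseval identity you only get that $\mathbb{W}_b$ is an isometry with closed range; to conclude the claimed identity you need surjectivity of $\mathbb{W}_b$ onto $L^2([0,\infty),d\mu_b)$, which does not follow from the resolution of the identity alone. This is the ``onto'' clause of the Weyl--Kodaira theorem and needs its own argument --- for instance: the range of $\mathbb{W}_b$ is a closed subspace invariant under multiplication by bounded Borel functions of $\lambda$, hence of the form $\chi_S L^2(d\mu_b)$; since $\mathbb{W}_b f$ is entire in $\lambda^2$ for $f\in C_0^\infty(1,\infty)$, it cannot vanish on a set of positive measure unless $f=0$, forcing $\mu_b(S^c)=0$. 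Finally, note the statement is a pointwise identity for $h\in C_0^\infty(0,\infty)$, so after the $L^2$ argument you still need a (routine) continuity upgrade of both sides in $u$. With these two additions your proof closes; as written, it proves Titchmarsh's theorem rather than Weber's.
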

\noindent
If we set 
\[
k(u) = (J_b^2(u)+Y_b^2(u))h(u),
\]
then $k \in C^\infty_0(0,\infty)$ if and only if $h$ is, and we can rewrite (\ref{eq:web}) as
\begin{equation}
\label{eq:web2}
k(u) = \int_1^\infty
\left(\int_0^\infty k(\lambda)G_{b}(\lambda,x)\frac{\lambda}{J^2_b(\lambda) + Y^2_b(\lambda)}\,d\lambda \right) G_{b}(u,x)\, x\,dx.
\end{equation}
Thus we define the inverse transform for $k \in C^\infty_0(0,\infty)$ by:
\begin{equation}
\label{eq:invtrans}
(\mathbb{W}^{-1}_bk)(x) := 
\int_0^\infty
k(\lambda)G_{b}(\lambda,x)\,d\mu_b(\lambda),
\end{equation}
where 
\[
d\mu_b(\lambda)=\frac{\lambda}{J^2_b(\lambda) + Y^2_b(\lambda)}\,d\lambda.
\]
We can rewrite (\ref{eq:web2}) as 
\begin{equation}
\label{eq:wwinv}
\mathbb{W}_b(\mathbb{W}_b^{-1}) k= k
\end{equation}
for all 
$k \in C^\infty_0(0,\infty)$.
In \cite{Ti}, Titchmarsh proved the opposite composition gives the following identity.
\begin{theorem}
\label{th:ti}
Let $f \in C^\infty_0(1,\infty)$.  Then $\mathbb{W}_b^{-1}\mathbb{W}_b f = f$.
\end{theorem}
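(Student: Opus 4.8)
The plan is to deduce this ``reverse'' inversion identity from the already-established identity $\mathbb{W}_b \mathbb{W}_b^{-1}=I$ of Equation (\ref{eq:wwinv}), together with the self-adjointness of the underlying Sturm--Liouville operator $B_b$ on $\mathcal{H}:=L^2([1,\infty),x\,dx)$. The first observation is that $\mathbb{W}_b^{-1}$ is the formal adjoint of $\mathbb{W}_b$ with respect to the two measures $x\,dx$ and $d\mu_b$: since $G_b(\lambda,x)$ is real for real $\lambda$ and $x$, an application of Fubini's theorem to $f\in C_0^\infty(1,\infty)$ and $k\in C_0^\infty(0,\infty)$ gives
\[
\langle \mathbb{W}_b f, k\rangle_{L^2(d\mu_b)} = \langle f, \mathbb{W}_b^{-1}k\rangle_{\mathcal{H}},
\]
so that $\mathbb{W}_b^{-1}=\mathbb{W}_b^{\ast}$ on these dense domains. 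Combined with (\ref{eq:wwinv}), this shows $\mathbb{W}_b^{\ast}$ is isometric on a dense set, hence extends to a bounded isometry, and that $\mathbb{W}_b=(\mathbb{W}_b^{\ast})^{\ast}$ is bounded. Consequently $P:=\mathbb{W}_b^{-1}\mathbb{W}_b=\mathbb{W}_b^{\ast}\mathbb{W}_b$ is an orthogonal projection, since $P^{\ast}=P$ and $P^2=\mathbb{W}_b^{\ast}(\mathbb{W}_b\mathbb{W}_b^{\ast})\mathbb{W}_b=P$. The theorem is therefore equivalent to the assertion $P=I$, that is, to the injectivity of $\mathbb{W}_b$, which is the \emph{completeness} of the generalized eigenfunctions $\{G_b(\lambda,\cdot)\}_{\lambda\geq 0}$.

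To prove this completeness I would invoke the spectral theorem for the self-adjoint realization of
\[
B_b = -\partial_x^2 - \frac{1}{x}\partial_x + \frac{b^2}{x^2}
\]
on $\mathcal{H}$ with Dirichlet conditions at $x=1$. First one checks self-adjointness: the endpoint $x=1$ is regular and carries the Dirichlet condition, while at $x=\infty$ neither solution is square integrable against $x\,dx$, so the operator is in the limit-point case there and the Dirichlet realization is self-adjoint. The two-dimensional solution space of $(B_b-\lambda^2)u=0$ is spanned by $J_b(\lambda x)$ and $Y_b(\lambda x)$; the combination satisfying the boundary condition at $x=1$ is precisely $G_b(\lambda,x)$, while the solution lying in $L^2$ near infinity for $\Im\lambda>0$ is the Hankel function $H^{(1)}_b(\lambda x)$. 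Dividing the product of these two solutions by their Wronskian yields an explicit integral kernel for the resolvent $(B_b-\lambda^2)^{-1}$.

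The main step, and the principal technical obstacle, is then to feed this resolvent into Stone's formula and compute the jump of its kernel across the positive real axis. For $f,g\in C_0^\infty(1,\infty)$ the limiting-absorption boundary values of the resolvent exist, and the spectral density is governed by the imaginary part of the kernel; using the identity $|H^{(1)}_b(\lambda)|^2=J_b^2(\lambda)+Y_b^2(\lambda)$ for real $\lambda$ one finds that this density is exactly $\frac{\lambda}{J_b^2(\lambda)+Y_b^2(\lambda)}\,d\lambda=d\mu_b(\lambda)$, that the spectrum is purely absolutely continuous and equal to $[0,\infty)$, and in particular that $B_b$ has trivial kernel. This yields the Parseval identity
\[
\|f\|_{\mathcal{H}}^2 = \int_0^\infty |\mathbb{W}_b f(\lambda)|^2\, d\mu_b(\lambda),
\]
so $\mathbb{W}_b$ is isometric and hence injective, giving $P=I$. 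The delicate points are justifying the interchange of limits and integrals in the limiting-absorption argument and obtaining the uniform control of the Bessel asymptotics required for it; these are precisely the estimates carried out in Titchmarsh's treatment, on which the statement is modelled. Combining $P=I$ with $\mathbb{W}_b^{-1}=\mathbb{W}_b^{\ast}$ gives $\mathbb{W}_b^{-1}\mathbb{W}_b f=f$ for all $f\in C_0^\infty(1,\infty)$, as claimed.
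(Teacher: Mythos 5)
First, a point of comparison: the paper does not actually prove this theorem. It is stated as a quotation of Titchmarsh's result and justified only by the citation \cite{Ti}; the surrounding material then uses Theorem \ref{th:ti} as input (for the surjectivity claim in the Proposition that follows it). So there is no argument in the paper to measure yours against, and your proposal must be judged on its own merits. Its skeleton is sound, and it is in fact the classical Weyl--Titchmarsh--Kodaira route, which is essentially Titchmarsh's own: reduce the identity to completeness (injectivity of $\mathbb{W}_b$), realize $B_b$ as a self-adjoint operator with a Dirichlet condition at $x=1$ and limit-point behaviour at infinity, build the resolvent kernel from the pair $G_b(\lambda,\cdot)$, $\mathrm{H}^{(1)}_b(\lambda\,\cdot)$ (note this kernel is literally the $r_b$ of Equation (\ref{reskern}) in Section 2.2), and extract the spectral measure $d\mu_b$ from Stone's formula.

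There are two gaps, one small and one substantive. The small one: your adjointness identity $\langle\mathbb{W}_b f,k\rangle_{L^2(d\mu_b)}=\langle f,\mathbb{W}_b^{-1}k\rangle_{\mathcal{H}}$ is established by Fubini only for $f\in C^\infty_0(1,\infty)$, but to deduce isometry of $\mathbb{W}_b^{-1}$ from (\ref{eq:wwinv}) you must apply it with $f=\mathbb{W}_b^{-1}g$, which is not compactly supported; to justify the interchange of integrals there you need the (rapid, non-stationary-phase) decay of $\mathbb{W}_b^{-1}g$ in $x$. This is exactly the computation in the paper's Proposition, whose isometry part does not rely on Theorem \ref{th:ti} — so there is no circularity — but as written your step is unjustified. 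The substantive one: the entire analytic content of the theorem sits in the limiting-absorption computation — existence of boundary values of the resolvent on $(0,\infty)$, interchange of the $\epsilon\to 0$ limit with the spectral integrals, absence of point spectrum, and identification of the density via $|\mathrm{H}^{(1)}_b(\lambda)|^2=\mathrm{J}_b^2(\lambda)+\mathrm{Y}_b^2(\lambda)$ — and your proposal does not carry this out, but explicitly defers these estimates to Titchmarsh. Since that is precisely what the theorem asserts in disguise, what you have is a correct and well-organized reduction plus proof outline, not a self-contained proof. Completing it would mean running the Stone's-formula computation with the kernel $r_b$ and the uniform Bessel asymptotics (\ref{eq:H1inf})--(\ref{eq:H2inf}) and (\ref{eq:Gbdinfty}) collected in the appendix.
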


We can extend these theorems to $L^2$ spaces to obtain:
\begin{proposition} The inverse transform extends to a bijective isometry
\[
\mathbb{W}^{-1}_{b}: L^2([0,\infty), d\mu_b(\lambda)) \rightarrow L^2([1,\infty), x\, dx)
\]
with inverse $\mathbb{W}_b$.
\end{proposition}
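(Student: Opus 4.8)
The plan is to build the extension from the two inversion identities already in hand---the Weber inversion formula (\ref{eq:wwinv}), which gives $\mathbb{W}_b\mathbb{W}_b^{-1}=\mathrm{id}$ on $C^\infty_0(0,\infty)$, and Titchmarsh's Theorem \ref{th:ti}, which gives $\mathbb{W}_b^{-1}\mathbb{W}_b=\mathrm{id}$ on $C^\infty_0(1,\infty)$---together with a Parseval-type identity that upgrades these algebraic inverses to bounded maps between the two $L^2$ spaces. First I would show that $\mathbb{W}_b$ preserves inner products on the dense subspace $C^\infty_0(1,\infty)\subset L^2([1,\infty),x\,dx)$. For $f,g\in C^\infty_0(1,\infty)$, one writes
\[
\langle \mathbb{W}_b f,\mathbb{W}_b g\rangle_{L^2(d\mu_b)}
=\int_0^\infty\left(\int_1^\infty f(x)\,G_b(\lambda,x)\,x\,dx\right)\overline{(\mathbb{W}_b g)(\lambda)}\,d\mu_b(\lambda),
\]
interchanges the order of integration, and uses that $G_b(\lambda,x)$ is real to recognize the resulting inner $\lambda$-integral as $\overline{(\mathbb{W}_b^{-1}\mathbb{W}_b g)(x)}=\overline{g(x)}$ by Theorem \ref{th:ti}. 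This collapses the expression to
\[
\langle \mathbb{W}_b f,\mathbb{W}_b g\rangle_{L^2(d\mu_b)}=\int_1^\infty f(x)\,\overline{g(x)}\,x\,dx=\langle f,g\rangle_{L^2(x\,dx)},
\]
so $\mathbb{W}_b$ is isometric on $C^\infty_0(1,\infty)$ and extends by continuity to an isometry $\overline{\mathbb{W}_b}\colon L^2([1,\infty),x\,dx)\to L^2([0,\infty),d\mu_b)$.

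Next I would run the symmetric computation with the roles reversed: for $k,\ell\in C^\infty_0(0,\infty)$, expand $\langle \mathbb{W}_b^{-1}k,\mathbb{W}_b^{-1}\ell\rangle_{L^2(x\,dx)}$, interchange integrals, and invoke the Weber formula $\mathbb{W}_b\mathbb{W}_b^{-1}=\mathrm{id}$ of Equation (\ref{eq:wwinv}) to collapse the inner $x$-integral to $\overline{\ell(\lambda)}$. This shows $\mathbb{W}_b^{-1}$ is isometric on the dense subspace $C^\infty_0(0,\infty)\subset L^2([0,\infty),d\mu_b)$ and hence extends to an isometry $\overline{\mathbb{W}_b^{-1}}\colon L^2([0,\infty),d\mu_b)\to L^2([1,\infty),x\,dx)$. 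Finally, the two extended operators are mutual inverses: the compositions $\overline{\mathbb{W}_b^{-1}}\,\overline{\mathbb{W}_b}=\mathrm{id}$ and $\overline{\mathbb{W}_b}\,\overline{\mathbb{W}_b^{-1}}=\mathrm{id}$ hold on the respective dense subspaces by Theorem \ref{th:ti} and Equation (\ref{eq:wwinv}), and pass to the whole spaces by continuity. An isometry admitting a two-sided inverse is a bijection, which yields the claim that $\mathbb{W}_b^{-1}$ is a bijective isometry with inverse $\mathbb{W}_b$.

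The only genuinely delicate point is the Fubini interchange underlying the Parseval identities. Although $f$ (or $k$) has compact support, the $\lambda$-integral runs against $d\mu_b(\lambda)=\lambda(J_b^2(\lambda)+Y_b^2(\lambda))^{-1}\,d\lambda$ out to infinity, and since $J_b^2(\lambda)+Y_b^2(\lambda)\sim 2/(\pi\lambda)$ the measure $d\mu_b$ grows like $\lambda^2\,d\lambda$ for large $\lambda$. To justify the interchange one must check absolute integrability of the double integral, using the large-$\lambda$ asymptotics $G_b(\lambda,x)=O(\lambda^{-1})$ (uniform over the compact $x$-support) together with the rapid decay of $\mathbb{W}_b g(\lambda)$; the latter follows from the smoothness of $g$ by repeatedly applying $B_b$ and using $\mathbb{W}_b(B_b g)=\lambda^2\mathbb{W}_b g$, so that $\lambda^{2m}\mathbb{W}_b g$ stays bounded for every $m$. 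This is a standard singular Sturm--Liouville estimate and presents the main technical obstacle, but no conceptual one; alternatively, one may bypass it by identifying the Weber transform with the spectral transform of the self-adjoint operator $B_b$ on $[1,\infty)$ with Dirichlet condition at $x=1$, whose generalized eigenfunction expansion is automatically a unitary equivalence onto $L^2([0,\infty),d\mu_b)$, so that the isometry and surjectivity follow from the abstract spectral theorem. Everything else in the argument is formal manipulation together with the density and continuity extension.
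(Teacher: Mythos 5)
Your proposal is correct and follows essentially the same route as the paper: the heart of both arguments is the Parseval identity for $\mathbb{W}_b^{-1}$ on the dense subspace $C^\infty_0(0,\infty)$, obtained by interchanging integrals (Fubini) and collapsing the inner integral with the Weber integral formula (\ref{eq:web}), followed by extension by continuity. The only difference is that the paper dispenses with your second (symmetric) Parseval computation for $\mathbb{W}_b$ and the composition argument, deducing surjectivity more economically from the fact that an isometry has closed image together with Theorem \ref{th:ti}, which shows the image contains the dense subset $C^\infty_0(1,\infty)$.
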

\begin{proof}
To show that the inverse transform extends to an isometry onto a subspace of 
$L^2([1,\infty), x\, dx)$, it suffices to check that it is an isometry for
$g,k \in C^\infty_0(0,\infty)$.  This is seen as follows:
\begin{gather*}
( \mathbb{W}^{-1}_{b}g,\mathbb{W}^{-1}_{b}k )_{L^{2}\left( [1,\infty),xdx\right)}=
\\
 \int_{1}^{\infty}
 \left(\int_{0}^{\infty}g(\lambda)G_{b}(\lambda,x)d\mu_b(\lambda)\right)
 \left(\int_{0}^{\infty}k(u)G_{b}(u,x)d\mu_b(u)\right)xdx=
\\
\int_{0}^{\infty}g(\lambda)\left( \int_{1}^{\infty}\left(\int_{0}^{\infty}k(u)G_{b}(u,x)
d\mu_b(u)\right)G_{b}(\lambda,x)\,xdx\right) d\mu_b(\lambda)
\end{gather*}
by Fubini's theorem.  Now let $h(u) = k(u)/(J^2_b(u)+Y^2_b(u))$.  This is still
in $C^\infty_0(0,\infty)$, so using (\ref{eq:web}) we get
\begin{gather*}
=\int_{0}^{\infty}g(\lambda)(J^2_b(\lambda)+Y^2_b(\lambda))h(\lambda)\,\lambda \,d\lambda\\
=(g,k)_{L^{2}( [0,\infty),d\mu_b(\lambda))}.
\end{gather*}
The map $\mathbb{W}^{-1}_b$ is surjective because it is an isometry and by Theorem
\ref{th:ti}, its image contains the dense subset $C^\infty_0(1,\infty) \subset L^2([1,\infty), x\, dx)$.

\end{proof}

We also have the following additional properties describing how the Weber transform  interacts
with operators of relevance to this paper:
\begin{lemma}

\begin{enumerate}
\item The domain of $B_b$ in $L^2([1,\infty), x\, dx)$ consists of $f$
such that $\lambda^2\mathbb{W}_bf \in L^2([0,\infty), d\mu_b(\lambda))$.  
For such $f$, $\mathbb{W}_b(B_bf)(\lambda) = \lambda^2 (\mathbb{W}_b f)(\lambda)$.

\item Let $\theta,\tilde{\theta}\in\mathcal{H}^{p}(N)\oplus\mathcal{H}^{p-1}(N)$ and $\alpha(x),\beta(x)\in L^{2}\left( [1,\infty),xdx\right)$ such that $\alpha=\mathbb{W}_{b_{p}}\tilde{\alpha}$ and $\beta=\mathbb{W}_{b_{p-1}-1}\tilde{\beta}$ for some $\tilde{\alpha},\tilde{\beta}$
such that $\lambda^2\tilde{\alpha} \in   L^2([0,\infty), d\mu_{b_p}(\lambda))$ and 
$\lambda^2 \tilde{\beta} \in L^2([0,\infty), d\mu_{b_{p-1}-1}(\lambda))$. Then
\begin{gather*}
\Delta (x^{b_{p}}\alpha\theta+dx\wedge x^{b_{p-1}}\beta\tilde{\theta})= 
\\
x^{b_{p}}\mathbb{W}_{b_{p}}(\lambda^{2}\tilde{\alpha})\theta+dx\wedge x^{b_{p-1}}\mathbb{W}_{b_{p-1}-1}(\lambda^{2}\tilde{\beta})\tilde{\theta}\in L_{\mathcal{H}}^{2}([1,\infty) \times N,\wedge^{p}T^{\ast}[1,\infty) \times N).
\end{gather*}
\end{enumerate}
\end{lemma}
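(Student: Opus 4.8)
The plan is to prove part (1) first, since part (2) is an almost immediate consequence of it combined with the reduction already carried out in the proofs of Theorems \ref{ttt1} and \ref{t2}. For part (1), I would begin by writing $B_b$ in Sturm--Liouville form,
\[
B_b = -\frac{1}{x}\,\partial_x\big(x\,\partial_x\,\cdot\,\big) + \frac{b^2}{x^2},
\]
which makes manifest that $B_b$ is formally self-adjoint for the weight $x\,dx$, that $x=1$ is a regular endpoint (so a Dirichlet condition is imposed there), and that $x=\infty$ is in the limit-point case. Hence the Dirichlet realization of $B_b$ is self-adjoint on $L^2([1,\infty),x\,dx)$, with $C^\infty_0(1,\infty)$ as a core. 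On this core I would compute $\mathbb{W}_b(B_b f)$ directly: integrating by parts twice and noting that all boundary terms vanish because $f$ is compactly supported in the open interval (so $f$ and $f'$ vanish near both endpoints, consistent with $G_b(\lambda,1)=0$), the formal self-adjointness transfers $B_b$ onto $G_b(\lambda,\cdot)$, and the eigenfunction identity $B_b G_b(\lambda,\cdot)=\lambda^2 G_b(\lambda,\cdot)$ recorded in the appendix gives $\mathbb{W}_b(B_b f)(\lambda)=\lambda^2(\mathbb{W}_b f)(\lambda)$ for all $f\in C^\infty_0(1,\infty)$.

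Next I would upgrade this intertwining relation to the domain statement using the Proposition establishing that $\mathbb{W}_b$ is a bijective isometry of $L^2([1,\infty),x\,dx)$ onto $L^2([0,\infty),d\mu_b)$. Since $\mathbb{W}_b$ intertwines $B_b$ (on the core) with multiplication by $\lambda^2$, it conjugates the self-adjoint realization of $B_b$ to the maximal multiplication operator by $\lambda^2$, whose domain is exactly $\{g : \lambda^2 g \in L^2(d\mu_b)\}$. Pulling this back through $\mathbb{W}_b$ yields $\mathrm{Dom}(B_b)=\{f : \lambda^2\mathbb{W}_b f \in L^2(d\mu_b)\}$ together with the identity $\mathbb{W}_b(B_b f)=\lambda^2\mathbb{W}_b f$ on all of this domain, which is part (1).

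For part (2) I would invoke the reduction from Section 2. Conjugation by $x^{b_p}$ is a unitary map $L^2([1,\infty),x\,dx)\to L^2([1,\infty),x^{-\gamma_p}dx)$, since $2b_p-\gamma_p=1$, and the computation already performed in the proofs of Theorems \ref{ttt1} and \ref{t2} shows it conjugates the $\theta$-component of $\Delta_{c,p}|_{\mathcal{V}^p_{\mathcal H}\oplus\mathcal{W}^p_{\mathcal H}}=\mathcal{D}_p+\mathcal{A}_p$ to $B_{b_p}$; likewise conjugation by $x^{b_{p-1}}$ sends the $dx\wedge\tilde\theta$-component to $B_{b_{p-1}-1}$, where the $\mathcal{A}_p$ term shifts the order via $b_{p-1}^2-\gamma_{p-1}=(b_{p-1}-1)^2$. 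Applying part (1) to each component turns the action of $\Delta$ into multiplication by $\lambda^2$ on the Weber side, which is precisely the asserted formula; the hypotheses $\lambda^2\tilde\alpha\in L^2(d\mu_{b_p})$ and $\lambda^2\tilde\beta\in L^2(d\mu_{b_{p-1}-1})$ say exactly that $\alpha,\beta$ lie in the respective operator domains, so the output is again fibre-harmonic and square integrable, giving membership in $L^2_{\mathcal H}$.

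The step I expect to be the main obstacle is the domain identification in part (1). The formal intertwining on $C^\infty_0(1,\infty)$ is routine, but concluding that the full self-adjoint domain is \emph{exactly} $\{f:\lambda^2\mathbb{W}_b f\in L^2(d\mu_b)\}$ requires that this core determine the operator uniquely — i.e. the limit-point property at infinity, so that the Dirichlet condition at $x=1$ alone fixes the self-adjoint extension — and that no spectral mass is lost under $\mathbb{W}_b$. Both rest on the unitarity of $\mathbb{W}_b$ from the Proposition and on Titchmarsh's inversion Theorem \ref{th:ti}; once these are in hand, everything remaining is purely algebraic.
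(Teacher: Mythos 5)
Your part (2) is fine and agrees with the paper's intent: conjugation by $x^{b_p}$ and $x^{b_{p-1}}$ (using $2b_p-\gamma_p=1$ and $(b_{p-1}-1)^2=b_{p-1}^2-\gamma_{p-1}$) reduces the fibre-harmonic Laplacian to $B_{b_p}$ and $B_{b_{p-1}-1}$, after which part (2) follows from part (1). The genuine gap is in part (1), at precisely the step you flagged and then dismissed. The closure of $B_b$ restricted to $C^\infty_0(1,\infty)$ is the \emph{minimal} Sturm--Liouville operator, and since $x=1$ is a regular endpoint (limit circle there, limit point only at infinity), this minimal operator has deficiency indices $(1,1)$; its self-adjoint extensions form a one-parameter family indexed by boundary conditions at $x=1$. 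Consequently $C^\infty_0(1,\infty)$ is \emph{not} a core for the Dirichlet realization, and your intertwining computation on $C^\infty_0(1,\infty)$ shows only that $\mathbb{W}_b^{-1}\circ(\lambda^2\cdot)\circ\mathbb{W}_b$ is \emph{some} self-adjoint extension of the minimal operator. It cannot by itself single out the Dirichlet one: the Neumann realization is another self-adjoint extension of the same minimal operator, it also restricts to the differential expression on $C^\infty_0(1,\infty)$, and it too is diagonalized (with its own spectral measure) by a transform whose kernel is a cylinder function, namely the one with $\partial_x G(\lambda,x)|_{x=1}=0$ instead of $G_b(\lambda,1)=0$. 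The limit-point property at infinity guarantees that one boundary condition at $x=1$ suffices to fix an extension; it does not make the minimal operator essentially self-adjoint, which is what your ``core'' step actually requires.

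The missing ingredient is the Dirichlet property $G_b(\lambda,1)=0$ of the Weber kernel, which in your write-up appears only parenthetically in the integration by parts, where it is not needed (the boundary terms vanish because $f$ is compactly supported). The paper sidesteps the problem by running the computation on the \emph{spectral} side: for $g\in C^\infty_0(0,\infty)$ one brings $B_b$ inside the integral in Equation (\ref{eq:webB}) to get $B_b(\mathbb{W}_b^{-1}g)=\mathbb{W}_b^{-1}(\lambda^2 g)$. The functions $\mathbb{W}_b^{-1}g$ automatically satisfy the Dirichlet condition at $x=1$ because $G_b(\lambda,1)=0$, and they form a core for the conjugated multiplication operator since $C^\infty_0(0,\infty)$ is a core for multiplication by $\lambda^2$. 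Hence that operator is a self-adjoint restriction of the maximal operator whose core lies in the Dirichlet domain, and is therefore the Dirichlet realization, not one of the other extensions. If you replace your space-side core with this spectral-side core, the rest of your argument (unitarity of $\mathbb{W}_b$ from the Proposition, Theorem \ref{th:ti}, and the reduction in part (2)) goes through as you describe.
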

\begin{proof}
These follow from the local coordinate form of $\Delta$ and from Equation 
\ref{eq:webB}, which holds for $g \in C^\infty_0(0,\infty)$ by bringing $B_b$ inside 
the integral, and extends to the domain of $B_b$ by continuity.
\end{proof}

\subsection{Asymptotics of Bessel functions} \label{besselapp}
In this subsection, we
recall identities and asymptotics of the Bessel functions $J_b$ and $Y_b$, as well as the 
Hankel functions $H_b^{(1)}$ and $H_b^{(2)}$,
and their extensions to the logarithmic cover of $\mathbb{C}\setminus \{0\}$ (see, e.g. \cite{dlom}).
We have the following asymptotic
estimates.  First, for $|z| \rightarrow \infty$, we have:
\begin{eqnarray}
\label{eq:Jinf}
J_b(z) \to \sqrt{\frac{2}{z \pi}}\cos(z - \frac{b\pi}{2} - \frac{\pi}{4})
& \mbox{ as } & z \to \infty \quad \mbox{ for } |pv(z)| \leq \pi - \delta\\
\label{eq:Yinf}
Y_b(z) \to \sqrt{\frac{2}{z \pi}}\sin(z - \frac{b\pi}{2} - \frac{\pi}{4})
& \mbox{ as } & z \to \infty \quad \mbox{ for } |pv(z)| \leq \pi - \delta,
\end{eqnarray}
where $pv$ denotes principal value.  
In addition, for any path to infinity along which $pv(z) \in [-\pi + \delta, 2\pi - \delta]$ 
we have the estimates
\begin{eqnarray}
\label{eq:H1inf}
H_b^{(1)}(z) \sim \left( \frac{2}{\pi z}     \right)^{1/2} \mathrm{e}^{\mathrm{i}(z - \frac{b\pi}{2} - \frac{\pi}{4})} \sum_{k=0}^\infty
i^k \frac{a_k(b)}{z^k}\\
\label{eq:H2inf}
H_b^{(2)} \sim \left( \frac{2}{\pi z}     \right)^{1/2} \mathrm{e}^{-\mathrm{i}(z - \frac{b\pi}{2} - \frac{\pi}{4})} \sum_{k=0}^\infty
(-i)^k \frac{a_k(b)}{z^k}.
\end{eqnarray}
These imply in particular that for real $\lambda >\epsilon$, 
\begin{equation}
\label{eq:Gbdinfty}
|\frac{G_b(\lambda, x)G_b(\lambda,t)}{J^2_b(\lambda) + Y^2_b(\lambda)}\lambda|
\leq c(\epsilon)(xt)^{-1/2}.
\end{equation}
We also recall that $\bar{H}^{(1)}_b(\bar{z}) = \mathrm{H}^{(2)}_b(z)$.

For $|\lambda| \rightarrow 0$, we have for real $\lambda<K$:
\begin{eqnarray}
\label{eq:J0}
J_b(\lambda) \to \frac{1}{\Gamma(b+1)}\left(\frac{\lambda}{2}\right)^b\left(1 + O(\lambda^{2+b})\right)
& \mbox{ as } & \lambda \to 0\\
\label{eq:Y0}
Y_b(\lambda) \to \frac{-\Gamma(b)}{2}\left(\frac{2}{\lambda}\right)^b\left(1 + O(\lambda^{2-b})\right)
& \mbox{ as } & \lambda \to 0.
\end{eqnarray}
Thus
\[
|\frac{G_{b}(\lambda,x)G_{b}(\lambda,t)}{J_{b}^{2}(\lambda)+Y_{b}^{2}(\lambda)}\lambda| \,\,\, \leq \,\,\, \Bigg\{ \begin{array}{ccc}c_{b}|(xt)^{b}-(\frac{x}{t})^{b}-(\frac{t}{x})^{b}+(xt)^{-b}| & \mbox{if} & b>0 \\ c_{b}|\ln x| |\ln t | & \mbox{if} & b=0 \\ c_{b} & \mbox{if} & b<0 \end{array}
\]
for an appropriate constant $c_{b}>0$, $x,t\geq1$ and $\lambda\in [0,\epsilon]$. 

The Bessel functions $J_b$ and $Y_b$ have 
meromorphic extensions $\tilde{J}_b$ and $\tilde{Y}_b$ to the logarithmic cover, $\mathbb{C}$,
of $\mathbb{C}\setminus\{0\}$.  If $\mathrm{e}^s=\lambda$, then for $\Im(s) \in [2\pi k, 2\pi (k+1))$,
\[
\tilde{J}_b(\lambda) = \mathrm{e}^{2k\pi \mathrm{i} b}J_b(s)
\]
and
\[
\tilde{Y}_b(\lambda) = \mathrm{e}^{-2k\pi \mathrm{i} b}Y_b(s) + 2\mathrm{i}\sin(2k\pi b)\frac{\cos(\pi b)}{\sin(\pi b)} J_b(s).
\]
Also, if $-1=\mathrm{e}^{-\mathrm{i}\pi}$, then
\begin{gather}\label{H777}
H_{b}^{(1)}(-z)=2\cos\pi b\cdot H_{b}^{(1)}(z)+\mathrm{e}^{-\mathrm{i}\pi b}H_{b}^{(2)}(z), \,\,\, H_{b}^{(2)}(-z)=-\mathrm{e}^{\mathrm{i}\pi b}H_{b}^{(1)}(z), 
\end{gather}
which hold for any $b\in\mathbb{C}$ (by taking the limit when $b$ is an integer) and any $z$ in the logarithmic cover (i.e. replace $z$ with $\mathrm{e}^{s}$, $s\in\mathbb{C}$).  Finally, we have in case $b>0$:
\[
\mathrm{H}^{(1)}_{-b}(z) = \mathrm{e}^{b\mathrm{i}\pi}H_b^{(1)}(z) \qquad \mathrm{H}^{(2)}_{-b}(z) = \mathrm{e}^{-b\mathrm{i}\pi}H_b^{(2)}(z).
\]

\end{document}